\documentclass{article}

\usepackage{amsmath, amssymb, amsthm}

\newtheorem{lemma}{Lemma}

\newtheorem{theorem}{Theorem}
\newtheorem{prop}{Proposition}

\newtheorem*{lemma*}{Lemma}
\newtheorem*{thrm*}{Theorem}

\theoremstyle{remark}
\newtheorem*{remark}{Remark}

\newcommand{\cinf}{C^\infty}
\newcommand{\ccinf}{C_c^\infty}
\newcommand{\supp}{\mathop{\mathrm{supp}}}
\newcommand{\WF}{\mathop{\mathrm{WF}}}
\renewcommand{\Re}{\mathop{\mathrm{Re}}}
\newcommand{\R}{\mathbb{R}}

\usepackage{cite}

\title{Multi-wave imaging in attenuating media}
\author{Andrew Homan \\ Purdue University \\ West Lafayette, Indiana, United States}
\date{}

\begin{document}
\maketitle

\begin{abstract}
We consider a mathematical model of thermoacoustic tomography and other multi-wave imaging techniques with variable sound speed and attenuation.
We find that a Neumann series reconstruction algorithm, previously studied under the assumption of zero attenuation, still converges if attenuation is sufficiently small.
With complete boundary data, we show the inverse problem has a unique solution, and modified time reversal provides a stable reconstruction.
We also consider partial boundary data, and in this case study those singularities that can be stably recovered.
\end{abstract}

\section{Introduction}

A multi-wave imaging technique couples a high-resolution imaging modality with a high-contrast imaging modality through a physical mechanism.
In medical imaging, ultrasound waves are often used, as they can provide sub-millimeter resolution in tissue.
The typical examples of such techniques are thermoacoustic tomography (TAT) and photoacoustic tomography (PAT).
The former uses low-frequency microwaves as the contrast modality; the latter uses near-infrared light.
Both are coupled to ultrasound through the photoacoustic effect.
Acoustic pressure is then recorded using transducers placed along the boundary of the region of interest.

To reconstruct an image from the acoustic data, one solves two coupled inverse problems.
First, one reconstructs the ultrasound source inside the region using the acoustic data on the boundary.
Then, the ultrasound source is used to reconstruct the absorption of the contrast modality within the tissue.
We do not consider the second problem here.
In the case of either TAT or PAT, it is called either quantitative thermoacoustic tomography (QTAT) or quantitative photoacoustic tomography (QPAT).

The problem of determining the source that induces a measured acoustic pressure at the boundary of a region has been well-studied under the assumptions of constant speed and zero attenuation.
In this case, the problem is essentially the inversion of a spherical mean operator \cite{finch, kuchment2}.
However, in tissue these assumptions do not hold.
Variations in either acoustic speed \cite{jin1} or attenuation \cite{cox1, deanben1, patch1, burgholzer1} can cause artifacts in reconstruction.
This has spurred the development of reconstruction methods using models of ultrasound propagation that include variable acoustic speed and attenuation.

The method of time reversal provides such a reconstruction \cite{hristova1, hristova2}.
In this method, one fixes a time $T$ (possibly infinite) and solves a mixed boundary problem for the acoustic wave equation with zero Cauchy data at time $T$ and the measured data along the boundary for $t \in (0, T)$.
This exploits the fact that the acoustic wave equation is invariant under time reversal, i.e., $t \mapsto -t$.
To ensure the measured data is compatible with the Cauchy data, a cut-off function is often used.
This technique has also been extended to the thermoelastic equation \cite{scherzer1, ammari1}.

It is also possible to choose Cauchy data that is compatible with the measured data, thereby avoiding the use of a cut-off function.
In \cite{tat1}, they demonstrate that a modified time reversal of this type provides a stable reconstruction in the acoustic wave model.
It has also been extended to the elastic wave equation \cite{tittelfitz1}.
One goal of this work is to extend modified time reversal to a model including variable attuenation.
An advantage of modified time reversal is that under some circumstances it yields an explicit Neumann series for the ultrasound source.
This Neumann series was studied numerically in \cite{tat3} and was shown to be an improvement over time reversal.

There are several commonly-used models of attenuation in biological tissue \cite{kowar1, kowar2, riviere1}.
Generally speaking, the attenuating effect of a medium depends upon the frequency of the wave propagating through it, with higher frequency waves experiencing more attenuation.
A common approximation posits a power law between frequency and attenuation \cite{roitner1, modgil, treeby1}.
Here we take the damped wave equation as a simple model of ultrasound attenuation.
We follow the undamped model of \cite{bal1} in assuming that the ultrasound source is pulse of the form $f(x)\delta^\prime(t)$.
This approximation can be made in the case when the contrast media is an electromagnetic wave, as it is in both TAT and PAT.
Writing the damped wave operator as $\square_a = \partial_t^2 + a\partial_t - c^2\Delta$, we then consider solutions of
\begin{equation}
 \left\{
 \label{Physical-Model}
 \begin{array}{rcl}
  \square_a u & = & f(x)\delta^\prime(t) \text{\ in\ } \R^{n+1}, \\
  u|_{t<0} & = & 0,
 \end{array}
 \right.
\end{equation}
in the sense of distributions.
Here $c$ is a smooth, positive function representing the sound speed.
If $\Omega$ is our region of interest, we assume $c$ is equal to one on $\R^n \setminus \Omega$.
The attenuation coefficient $a$ is assumed to be a smooth, nonnegative function.
If we take $f$ to be in $H^1_0(\Omega)$, then solutions of the Cauchy problem
\begin{equation}
 \left\{
 \label{Damped-Wave-Equation}
 \begin{array}{rcl}
  \square_a u & = & 0 \text{\ in\ } (0, \infty) \times \R^n, \\
  u|_{t=0} & = & f, \\
  \partial_t u|_{t=0} & = & -af.
 \end{array}
 \right.
\end{equation}
are also solutions of \eqref{Physical-Model} when extended by zero to $(-\infty, 0) \times \R^n$.
To show this, take $u$ to be a smooth solution of \eqref{Damped-Wave-Equation} on $\R^{n+1}$.
Then we may consider the distribution $H(t)u(t, x)$, where $H(t)$ is the Heaviside function.
We have that
\[
 \square_a(Hu) = u\delta^\prime + 2(\partial_t u)\delta + au\delta + (\square_a u)H.
\]
The last term is zero, as $\square_a u = 0$.
Let $\phi \in \ccinf(\R^{n+1})$ be a test function.
Then
\begin{align*}
 \langle \square_a(Hu), \phi \rangle &= \int_{\R^n} \left. \left[ - (\partial_t u) \phi - u (\partial_t \phi) + 2(\partial_t u) \phi + au\phi \right]
\right|_{t=0} \, dx, \\
  &= - \int_{\R^n} f\partial_t \phi|_{t=0}\, dx, \\
  &= \langle f\delta^\prime, \phi \rangle,
\end{align*}
which implies that $u$ satisfies \eqref{Physical-Model}.
This justifies the choice of boundary conditions in \eqref{Damped-Wave-Equation}.

Let $\Gamma = \partial\Omega$ be the boundary of region of interest.
If we have access to pressure data on the entire boundary, we can model the measurements with the operator $\Lambda_a f = u|_{(0, T) \times \Gamma}$.
For the partial boundary case, we will take a cut-off function $\chi$ supported on $(0, T) \times \Gamma^\prime$, with $\Gamma^\prime \subset \Gamma$, and model the partial boundary data with $\chi\Lambda_a$.
As $(f, -af)$ is in the energy space $\mathcal{H} = H^1_0(\Omega) \times L^2(\Omega)$, it follows from \cite{llt} that $\Lambda_a f$ is a bounded operator from $H_D(\Omega)$ to $H^1((0, T) \times \Gamma)$.
The inverse problem is to reconstruct $f$ from the measured data $h = \Lambda_a f$.
In the damped case, the modified time reversal of \cite{tat1} suggests that we construct a potential pseudoinverse $A_a h$ from the backward problem with mixed boundary data,
\begin{equation}
 \label{Backward-Problem}
 \left\{
 \begin{array}{rcl}
  \square_a v & = & 0 \text{\ in\ } (0, T) \times \Omega, \\
  v|_{t=T} & = & \phi, \\
  \partial_t v|_{t=T} & = & 0, \\
  v|_{(0, T) \times \Gamma} & = & h.
 \end{array}
 \right.
\end{equation}
Here $\phi$ satisfies $\Delta \phi = 0$ on $\Omega$ and $\phi|_{\Gamma} = h|_{t=T}$.
By the trace theorem, $h|_{t=T} \in H^{1/2}(\Gamma)$, so the problem is well-posed.
We then define $A_a h = v|_{t=0}$.

Our first goal is to determine whether modified time reversal is an effective reconstruction method for the damped wave model.
In particular, we are interested in the error operator $K_a = A_a\Lambda_a - I$.
In the undamped case, under some geometric assumptions, $K_0$ is a strict contraction \cite{tat1}.
This implies that the Neumann series
\[
 f = \sum_{m=0} K_0^m A_0 h
\]
converges.
This series can be used to numerically reconstruct $f$, and in \cite{tat3} it was shown to be an improvement over the usual time reversal algorithm.
In the second section, we show that for small attenuations the above Neumann series is still convergent.
In the third section, we consider the uniqueness and stability of modified time reversal in the presence of arbitrary attenuation, given complete data.
In the fourth section, we show that in some cases partial data is sufficient to recover the singularities of $f$.
In the last section we prove an a priori estimate for the damped wave equation with mixed boundary data, following an earlier estimate \cite{llt}.

\section{Modified time reversal}

We begin with some preliminary remarks on the damped wave equation.
We will assume that the initial data $(f, -af)$ lies in the energy space $\mathcal{H} = H_D(\Omega) \oplus L^2(\Omega, c^{-2}\, dx^2)$. 
Here, $H_D(\Omega)$ is the completion of $C^\infty_0(\Omega)$ under the norm
\[
 ||f||_{H_D}^2 = \int_\Omega |\nabla f|^2\, dx.
\]
By Poincar\'e's inequality, the norm of $H_D(\Omega)$ is equivalent to that of $H^1_0(\Omega)$. 
Equip $\mathcal{H}$ with the inner product
\[
 \langle (a,b), (c,d) \rangle = \int_\Omega \nabla a \cdot \overline{\nabla c} + c^{-2}b\overline{d}\, dx,
\]
which makes $\mathcal{H}$ into a Hilbert space.

Let $U \subset \R^n$.
For an arbitrary function $w$ that is in both $C^1(0, T; L^2(U))$ and $C(0, T; H^1(U))$, define the local energy functional
\[
 E_U(w, t) = \frac{1}{2} \int_U |\nabla w(t, x)|^2 + c^{-2}|\partial_t w(t, x)|^2\, dx.
\]
Both solutions of the wave equation and the damped wave equation are such functions, and it is well-known that energy is conserved in the first case and nonincreasing in the second case.

For use later on, we state an a priori estimate for solutions of the nonhomogeneous damped wave equation with mixed boundary data.
We defer the proof to the last section.
\begin{prop}
\label{Apriori-Theorem}
Let $\Omega$ be a smooth domain, and $u$ be a solution of
\[
 \left\{
  \begin{array}{rcl}
  \square_a u &=& F \text{\ in\ } (0, T) \times \Omega, \\
  u|_{t=0} &=& f, \\
  \partial_t u|_{t=0} &=& g \\
  u|_{(0, T) \times \Gamma} &=& h
  \end{array}
 \right.
\]
subject to the compatibility condition $h|_{t=0} = f|_{\Gamma}$. Then there exists a constant $C > 0$ such that
\[ 
 \sup_{t \in (0, T)} ||(u, \partial_t u)||_{\mathcal{H}} \le C e^{T||a||_\infty} \left\{||F||_{L^1(0, T; L^2)} + ||f||_{H^1} + ||g||_{L^2} + ||h||_{H^1} \right\}, 
\]
provided the boundary data are sufficiently regular.
\end{prop}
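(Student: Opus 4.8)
The plan is to work directly with $u$ and close a Gr\"onwall inequality for the local energy $E_\Omega(u,t)$, the only non-routine ingredient being a sharp trace estimate for the normal derivative $\partial_\nu u$ on $(0,T)\times\Gamma$, in the spirit of \cite{llt}. First I would reduce to the case of smooth data: assuming $F$, $f$, $g$, $h$ smooth and compatible to all orders, $u$ is a classical solution and every manipulation below is legitimate; since the final bound involves only the norms in the statement, the general case follows by approximation. This is what ``provided the boundary data are sufficiently regular'' refers to.

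Differentiating the energy, using the equation to replace $\partial_t^2 u = c^2\Delta u - a\,\partial_t u + F$, and integrating by parts in $x$ yields
\[
 \frac{d}{dt}E_\Omega(u,t) = \int_\Gamma (\partial_t h)(\partial_\nu u)\,dS - \int_\Omega c^{-2}a\,|\partial_t u|^2\,dx + \int_\Omega c^{-2}(\partial_t u)F\,dx ,
\]
where I used $\partial_t u|_\Gamma = \partial_t h$. The damping term is bounded in absolute value by $2||a||_\infty E_\Omega(u,t)$ (this produces the exponential factor), and the source term by $C\,E_\Omega(u,t)^{1/2}||F(t,\cdot)||_{L^2}$. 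Integrating in $t$, estimating the boundary term by Cauchy--Schwarz as $||\partial_t h||_{L^2((0,T)\times\Gamma)}\,||\partial_\nu u||_{L^2((0,T)\times\Gamma)} \le ||h||_{H^1((0,T)\times\Gamma)}\,||\partial_\nu u||_{L^2((0,T)\times\Gamma)}$, and using $E_\Omega(u,0)\le C(||f||_{H^1}^2+||g||_{L^2}^2)$, one arrives at
\[
 E_\Omega(u,t) \le C\big(||f||_{H^1}^2+||g||_{L^2}^2\big) + ||h||_{H^1}\,||\partial_\nu u||_{L^2((0,T)\times\Gamma)} + 2||a||_\infty\!\int_0^t\! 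E_\Omega(u,s)\,ds + C\!\int_0^t\! E_\Omega(u,s)^{1/2}||F(s,\cdot)||_{L^2}\,ds .
\]

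The main obstacle is that the trace theorem only gives $\partial_\nu u\in H^{-1/2}$, so the boundary term above is a priori uncontrolled; what is needed is the hidden-regularity bound
\[
 ||\partial_\nu u||_{L^2((0,T)\times\Gamma)}^2 \le C\Big(\sup_{t\in(0,T)}E_\Omega(u,t) + ||h||_{H^1((0,T)\times\Gamma)}^2 + ||F||_{L^1(0,T;L^2)}^2 + ||f||_{H^1}^2\Big) .
\]
Following \cite{llt}, I would prove this by a multiplier (Rellich--Pohozaev) identity: pair $\square_a u = F$ with $\beta\cdot\nabla u$, where $\beta\in \cinf(\overline\Omega;\R^n)$ equals the outward unit normal $\nu$ on $\Gamma$, and integrate by parts over $(0,T)\times\Omega$. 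The boundary contributions reassemble into $\tfrac12\int_{(0,T)\times\Gamma}|\partial_\nu u|^2$ (using $c = 1$ on $\Gamma$, after rewriting $c^2\Delta u(\beta\cdot\nabla u)=\Delta u\,(c^2\beta\cdot\nabla u)$) together with terms quadratic in the tangential gradient of $u|_\Gamma=h$ and in $\partial_t h$, hence controlled by $||h||_{H^1((0,T)\times\Gamma)}^2$; the interior contributions are the energy at $t=0,T$, the quadratic forms generated by $\nabla\beta$, $\nabla\!\cdot\beta$ and $\nabla(c^2)$, the damping term $\int\!\int a(\partial_t u)(\beta\cdot\nabla u)$, and $\int\!\int F(\beta\cdot\nabla u)$ — all bounded by $\sup_t E_\Omega(u,t)$ and $||F||_{L^1 L^2}^2$, with the zeroth-order piece $||u||_{L^2((0,T)\times\Omega)}^2$ absorbed via $u(t)=f+\int_0^t\partial_s u\,ds$. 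This careful accounting of boundary and lower-order terms is the technical heart of the argument.

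Finally I would combine the two estimates. With $M(t)=\sup_{s\le t}E_\Omega(u,s)$, substitute the trace bound into the energy inequality and use Young's inequality to absorb $||h||_{H^1}\,||\partial_\nu u||_{L^2}$ and $\int_0^t E_\Omega^{1/2}||F||_{L^2}$ into $\tfrac12 M(t)$ plus squared data norms; taking the supremum over $[0,t]$ on the left gives
\[
 M(t) \le C\big(||f||_{H^1}^2+||g||_{L^2}^2+||h||_{H^1((0,T)\times\Gamma)}^2+||F||_{L^1(0,T;L^2)}^2\big) + C||a||_\infty\int_0^t M(s)\,ds .
\]
Gr\"onwall's inequality then yields $M(T)\le C\,e^{CT||a||_\infty}(\cdots)$, and taking square roots together with $\sqrt{x^2+y^2+\cdots}\le x+y+\cdots$ gives the claimed estimate; a slightly more careful handling of the damping term (keeping it as $||a||_\infty\int_0^t E_\Omega$ throughout, rather than bounding by $\sup_t E_\Omega$) recovers the stated constant $e^{T||a||_\infty}$.
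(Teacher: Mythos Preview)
Your proposal is correct and follows essentially the same route as the paper's proof: both arguments (i) differentiate the energy (equivalently, multiply the equation by $c^{-2}\partial_t u$) to obtain an inequality containing the uncontrolled boundary term $\int_\Gamma(\partial_t h)(\partial_\nu u)$, (ii) bound $||\partial_\nu u||_{L^2((0,s)\times\Gamma)}$ via the Rellich--Pohozaev multiplier $\nabla u\cdot n$ (your $\beta\cdot\nabla u$, the paper's $c^{-2}(\nabla u\cdot n)$), and (iii) close with Gr\"onwall. The only cosmetic differences are that the paper places the $c^{-2}$ weight on the multiplier rather than invoking $c=1$ on $\Gamma$, and it keeps all time integrals on $(0,s)$ rather than $(0,T)$---you should do the latter too, so that the hidden-regularity bound feeds into $M(t)$ and not $M(T)$, but your final Gr\"onwall step makes clear you intend this.
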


Now, take $u$ to be the solution of \eqref{Damped-Wave-Equation} and take $v$ to be the corresponding solution of \eqref{Backward-Problem} with $h = \Lambda_a f$. 
To study the error operator $K_a = A_a\Lambda_a - I$, we study the function $w = u - v$, which solves the IBVP,
\begin{equation}
 \label{Error-Problem}
 \left\{
 \begin{array}{rcl}
  \square_a w & = & 0 \text{\ in\ } [0, T] \times \Omega, \\
  w|_{t=T} & = & u|_{t=T} - \phi, \\
  \partial w|_{t=T} & = & \partial_t u|_{t=T}, \\
  w|_{[0, T] \times \Gamma} & = & 0.  
 \end{array}
 \right.
\end{equation}
Recall $\phi$ is the harmonic extension of $u|_{\{T\} \times \Gamma}$ to $\Omega$, and so the boundary data is compatible. 
Let $W = (w, \partial_t w)$. 
Then \eqref{Error-Problem} reduces to the first-order system,
\[
 (\partial_t + Q_a)W = 0,
\]
where $Q_a$ is the operator
\[
 Q_a = \left[
 \begin{array}{cc}
  0 & -I \\
  -c^2\Delta & a
 \end{array}
 \right].
\]
The following energy estimate for solutions of \eqref{Error-Problem} is used often in what follows.
\begin{lemma}
 \label{Energy-Estimate}
 Let $\omega$ solve
 \[
  \left\{
  \begin{array}{rcl}
   \square_a \omega & = & 0 \text{\ in\ } [0, T] \times \Omega, \\
   \omega|_{t=T} & = & \omega_0, \\
   \partial_t \omega|_{t=T} & = & \omega_1, \\
   \omega|_{[0, T] \times \Gamma} & = & 0.
  \end{array}
  \right.
 \]
 Then for $0 \le t \le T$,
 \[ E_\Omega(\omega, t) \le e^{2(T-t)||a||_\infty} E_\Omega(\omega, T). \]
\end{lemma}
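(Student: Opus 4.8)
The plan is to differentiate the local energy $E_\Omega(\omega,t)$ in $t$, use the equation together with the homogeneous lateral boundary condition to turn this derivative into a quantity controlled by $E_\Omega(\omega,t)$ itself, and then run Gr\"onwall's inequality backward from $t=T$. The point to keep in mind is that the damping term, which normally makes the energy decay forward in time, works against us here because we propagate from $T$ down to $t$; this is exactly why the bound carries the growth factor $e^{2(T-t)\|a\|_\infty}$ rather than a clean constant. Throughout, I would first assume $\omega$ is smooth enough to justify differentiating the energy pointwise in $t$ and integrating by parts in space, and recover the general case at the end by approximating the Cauchy data $(\omega_0,\omega_1)$ by smooth compatible data and passing to the limit using Proposition~\ref{Apriori-Theorem} with $F=0$ and $h=0$; I would also take real parts everywhere in case $\omega$ is complex-valued.

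The core computation runs as follows. First,
\[
 \frac{d}{dt}E_\Omega(\omega,t) = \Re\int_\Omega \nabla(\partial_t\omega)\cdot\overline{\nabla\omega} + c^{-2}(\partial_t^2\omega)\,\overline{\partial_t\omega}\,dx .
\]
Integrating the first term by parts and using that $\omega$, hence $\partial_t\omega$, vanishes on $[0,T]\times\Gamma$, the boundary integral drops out and
\[
 \frac{d}{dt}E_\Omega(\omega,t) = \Re\int_\Omega \overline{\partial_t\omega}\,\bigl(-\Delta\omega + c^{-2}\partial_t^2\omega\bigr)\,dx .
\]
Since $\square_a\omega=0$ gives $-\Delta\omega + c^{-2}\partial_t^2\omega = -c^{-2}a\,\partial_t\omega$, this becomes
\[
 \frac{d}{dt}E_\Omega(\omega,t) = -\int_\Omega c^{-2}a\,|\partial_t\omega|^2\,dx ,
\]
which in particular recovers the well-known fact that $E_\Omega(\omega,t)$ is nonincreasing in $t$. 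Bounding $0\le a\le\|a\|_\infty$ and $\int_\Omega c^{-2}|\partial_t\omega|^2\,dx\le 2E_\Omega(\omega,t)$ yields the differential inequality $-\tfrac{d}{dt}E_\Omega(\omega,t)\le 2\|a\|_\infty\,E_\Omega(\omega,t)$. Writing $s=T-t$ and $\tilde E(s)=E_\Omega(\omega,T-s)$, this reads $\tilde E'(s)\le 2\|a\|_\infty\,\tilde E(s)$ on $[0,T]$, so Gr\"onwall gives $\tilde E(s)\le e^{2\|a\|_\infty s}\tilde E(0)$, i.e.\ $E_\Omega(\omega,t)\le e^{2(T-t)\|a\|_\infty}E_\Omega(\omega,T)$, as claimed.

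I expect the only genuine obstacle to be the regularity bookkeeping: the pointwise-in-$t$ differentiation of $E_\Omega$ and the spatial integration by parts are not literally justified for data lying merely in the energy space, so one really does need the density/approximation step above (equivalently, a time mollification argument). The cancellation of the boundary term, the algebraic substitution from the equation, and the Gr\"onwall step are all routine, and no geometric hypotheses on $\Omega$ are needed here — only smoothness of the domain, so that the integration by parts and the trace identity $\partial_t\omega|_\Gamma=0$ make sense.
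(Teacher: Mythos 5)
Your argument is correct, and it reaches the stated bound by a different route than the paper. You differentiate the energy directly, integrate by parts using the homogeneous Dirichlet condition (so the boundary term with $\partial_t\omega\,\overline{\partial_n\omega}$ drops), substitute the equation to get $\frac{d}{dt}E_\Omega(\omega,t) = -\int_\Omega c^{-2}a|\partial_t\omega|^2\,dx$, and then run Gr\"onwall backward from $t=T$; the constants work out exactly to $e^{2(T-t)\|a\|_\infty}$. The paper instead casts the problem as a first-order system $(\partial_t+Q_a)W=0$ on $\mathcal{H}$ and invokes the contraction-semigroup theorem of Reed--Simon for the shifted generator $Q_a+\|a\|_\infty I$, obtaining the operator bound $e^{(T-t)\|a\|_\infty}$ on the backward propagator. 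The two are really the same dissipativity computation in different packaging: your pointwise identity $-\int c^{-2}a|\partial_t\omega|^2$ is precisely the quantity showing $\Re\langle Q_aW,W\rangle_{\mathcal{H}}\ge 0$ in the paper's argument. What the semigroup route buys is that the backward solution for energy-space data is defined directly as $e^{(T-t)Q_a}$ applied to $(\omega_0,\omega_1)$, so the regularity bookkeeping you rightly flag (justifying $\frac{d}{dt}E_\Omega$ and the integration by parts only for smooth data, then passing to the limit) is absorbed into the functional-analytic framework; your route is more elementary and self-contained, and in addition exhibits the monotonicity of the energy forward in time. One small caveat in your closing remark: Proposition~\ref{Apriori-Theorem} as stated is a forward-in-time estimate with data at $t=0$, so to use it for the approximation step you should first change variables $t\mapsto T-t$ (which replaces $a$ by $-a$ but leaves $\|a\|_\infty$, hence the constant, unchanged), or simply appeal to well-posedness of the backward homogeneous Dirichlet problem; this is cosmetic, not a gap.
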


\begin{proof}
Define the operator
 \[
  e^{-tQ_a} = e^{-t(Q_a + ||a||_\infty I)} e^{t||a||_\infty I}.
 \]
By \cite[Theorem X.48]{reed}, if $\Re(Hf, f) \ge 0$ for all $f \in D(H)$, then $e^{-tH}$ is a $C^0$-semigroup of contractions for $t \ge 0$.
This is true of $Q_a + ||a||_\infty$ as an operator on $\mathcal{H}$. 
Therefore $||e^{-tQ_a}|| \le e^{t||a||_\infty}$ for $t \ge 0$.
One can also verify directly that $\omega(t) = e^{-(T-t)Q_a}(\omega_1, \omega_2)$ is the solution of the problem in the lemma.

Hence for $0 \le t \le T$,
\[
 E_\Omega(\omega, t) \le ||e^{-(T-t)}Q_a(\omega_1, \omega_2)||_{\mathcal{H}}^2 \le e^{2T||a||_\infty}E_\Omega(\omega, T),
\]
yielding the desired energy estimate.
\end{proof}

This is a quantitative estimate of the nonincrease of energy in the damped wave equation, in a time-reversed sense.
When solved backward, the damped wave equation gains energy at most exponentially.
One consequence of this estimate is a bound on $||K_a f||_{H_D}$.

\begin{lemma}
 \label{Error-Norm}
 There exists a constant $C > 0$, independent of $a$, such that 
 \[ 
 ||K_a f||_{H_D} \le (1 + C||a||_\infty^2)^{1/2} e^{T||a||_\infty} ||f||_{H_D}.
 \]
\end{lemma}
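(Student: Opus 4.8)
The plan is to turn the desired bound into an energy estimate for the error function $w = u - v$ of \eqref{Error-Problem} and then propagate Lemma~\ref{Energy-Estimate} backward from $t = T$. Since $A_a\Lambda_a f = v|_{t=0}$ and $f = u|_{t=0}$, we have $K_a f = v|_{t=0} - u|_{t=0} = -w|_{t=0}$, and because $w|_{\Gamma} = 0$ its $H_D$-norm is controlled by the local energy: $\tfrac12 ||K_a f||_{H_D}^2 \le E_\Omega(w, 0)$. Applying Lemma~\ref{Energy-Estimate} to $\omega = w$, with terminal data $\omega_0 = u|_{t=T} - \phi$ and $\omega_1 = \partial_t u|_{t=T}$, gives $E_\Omega(w, 0) \le e^{2T||a||_\infty} E_\Omega(w, T)$. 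So the task reduces to bounding the terminal energy $E_\Omega(w, T)$ by $||f||_{H_D}^2$, with an $a$-dependence no worse than $1 + C||a||_\infty^2$.

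For this, first compare $E_\Omega(w, T)$ with $E_\Omega(u, T)$. The velocity parts agree, $\partial_t w(T) = \partial_t u(T)$, while the position part of $w(T)$ is $u(T) - \phi$, where $\phi$ is the harmonic extension of $u|_{\{T\}\times\Gamma}$. Since $\phi$ is harmonic and $u(T) - \phi \in H_D(\Omega)$, an integration by parts shows $\int_\Omega \nabla(u(T) - \phi)\cdot\overline{\nabla\phi}\, dx = 0$, hence $\int_\Omega |\nabla(u(T) - \phi)|^2\, dx \le \int_\Omega |\nabla u(T)|^2\, dx$ and therefore $E_\Omega(w, T) \le E_\Omega(u, T)$. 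Next, bound $E_\Omega(u, T)$ using dissipation of energy for the damped wave equation: extending $f$ by zero, $u$ solves \eqref{Damped-Wave-Equation} on all of $\R^n$ with finite energy, and $\tfrac{d}{dt}E_{\R^n}(u, t) = -\int c^{-2} a |\partial_t u|^2\, dx \le 0$, so $E_\Omega(u, T) \le E_{\R^n}(u, T) \le E_{\R^n}(u, 0)$. Finally, $E_{\R^n}(u, 0) = E_\Omega(u, 0)$ because the Cauchy data $(f, -af)$ is supported in $\overline\Omega$, and
\[
 E_\Omega(u, 0) = \tfrac12 \int_\Omega |\nabla f|^2 + c^{-2} a^2 |f|^2\, dx \le \tfrac12 ||f||_{H_D}^2 + \tfrac12 ||a||_\infty^2 \left(\sup_\Omega c^{-2}\right) ||f||_{L^2(\Omega)}^2 .
\]
Since $c$ is continuous and positive on the compact set $\overline\Omega$, $\sup_\Omega c^{-2} < \infty$, and Poincar\'e's inequality gives $||f||_{L^2(\Omega)}^2 \le C_P ||f||_{H_D}^2$; both constants depend only on $\Omega$ and $c$. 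Hence $E_\Omega(u, 0) \le \tfrac12(1 + C||a||_\infty^2)||f||_{H_D}^2$ with $C = C_P \sup_\Omega c^{-2}$, independent of $a$.

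Chaining these estimates, $\tfrac12 ||K_a f||_{H_D}^2 \le E_\Omega(w, 0) \le e^{2T||a||_\infty} E_\Omega(w, T) \le e^{2T||a||_\infty} E_\Omega(u, 0) \le \tfrac12 e^{2T||a||_\infty}(1 + C||a||_\infty^2)||f||_{H_D}^2$, and taking square roots yields the claimed inequality.

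The routine but essential point to watch is the comparison $E_\Omega(w, T) \le E_\Omega(u, T)$: it hinges on the fact that subtracting the harmonic extension can only decrease Dirichlet energy, which is exactly why replacing $u(T)$ by $u(T) - \phi$ in the backward problem is harmless. The other place requiring a little care is that the dissipation inequality must be applied on $\R^n$, where $u$ is genuinely a solution with no boundary flux, rather than on $\Omega$; and that the constant $C$ picks up no hidden $a$-dependence — the only $a$-dependence outside the exponential enters through the kinetic term $c^{-2}a^2|f|^2$ in $E_\Omega(u, 0)$, which is precisely the origin of the factor $(1 + C||a||_\infty^2)^{1/2}$.
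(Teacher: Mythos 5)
Your proof is correct and follows essentially the same route as the paper: reduce $K_a f$ to the error function $w$, apply the backward energy bound of Lemma \ref{Energy-Estimate}, use harmonicity of $\phi$ (Dirichlet-orthogonality) to bound $E_\Omega(w,T)$ by $E_\Omega(u,T)$, control that by the initial energy via nonincrease of the damped-wave energy, and absorb the $\|c^{-1}af\|_{L^2}$ term with Poincar\'e's inequality. The only difference is that you spell out the dissipation step on $\R^n$ and the orthogonality computation explicitly, which the paper leaves implicit.
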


\begin{proof}
Again let $W = (w, \partial_t w)$. 
By Lemma \ref{Energy-Estimate},
\begin{align*}
 ||K_a f||_{H_D}^2 &\le ||W(0)||_\mathcal{H}^2 \\
                   &\le e^{2T||a||_\infty} ||W(T)||_{\mathcal{H}}^2.
\end{align*}
Using the fact that $\phi$ is harmonic, we see that
\[
 ||W(T)||_{\mathcal{H}}^2 = ||(u|_{t=T} - \phi, \partial_t u|_{t=T})||_{\mathcal{H}}^2 \le E_\Omega(u, T).
\]
Hence
\[
 ||K_a f||_{H_D}^2 \le e^{2T||a||_\infty}(||f||_{H_D}^2 + ||c^{-1}af||_{L^2}^2).
\]
By Poincar\'e's inequality, $||c^{-1}af||_{L^2}^2 \le C||a||_\infty^2||f||_{H_D}^2$, and so we have
\[ 
 ||K_a f||_{H_D} \le (1 + C||a||_\infty^2)^{1/2}e^{T||a||_\infty}||f||_{H_D}.
\]

By itself, this estimate cannot show $K_a$ is a strict contraction on $H_D(\Omega)$, this being sufficient for the convergence of the Neumann series.
\end{proof}

\subsection{Convergence of the Neumann series}

Let $T_0$ be the supremum of the length of geodesics in $(\Omega, c^{-2}\, dx^2)$.
If the metric is trapping, this is infinite.
In \cite{tat1}, they show that if $T_0 < T < \infty$, then $||K_0|| < 1$.
In that case, the Neumann series
\[
 f = \sum_{m=0}^\infty K_0^m A_0 h
\]
converges. 
This series can be used as an effective numerical reconstruction algorithm \cite{tat3}. 
We extend this convergence result by continuity to the damped wave model, given sufficiently small attenuation.
\begin{theorem}
 \label{Convergence}
 If $T_0 < T < \infty$, then there exists $a_0 > 0$ such that for all $a$ such that $||a||_\infty < a_0$, the Neumann series
 \[
  f = \sum_{m=0}^\infty K_a^m A_a h
 \]
 converges.
\end{theorem}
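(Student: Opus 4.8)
The plan is to prove the theorem by a perturbation argument. Since $T_0 < T < \infty$, the undamped error operator satisfies $\kappa := \|K_0\|_{\mathcal{L}(H_D)} < 1$ by \cite{tat1}, and, exactly as in the undamped case, the Neumann series converges whenever $\|K_a\| < 1$. So it suffices to produce a constant $C = C(T, \Omega, c) > 0$, independent of $a$, with
\[
 \|K_a - K_0\|_{\mathcal{L}(H_D)} \le C\|a\|_\infty \qquad \text{whenever } \|a\|_\infty \le 1 .
\]
Granting this, set $a_0 = \min\{1, (1-\kappa)/(2C)\}$; then for $\|a\|_\infty < a_0$ one has $\|K_a\| \le \kappa + C\|a\|_\infty < (1+\kappa)/2 < 1$, so $K_a$ is a strict contraction on $H_D(\Omega)$ and $\sum_m K_a^m A_a h$ converges there (here $A_a h = A_a\Lambda_a f = f - w_a(0)$ indeed lies in $H_D(\Omega)$, with $w_a$ as below, since $w_a(0) \in H^1_0(\Omega)$).

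Fix $f \in H_D(\Omega)$ and write $u_a$, $v_a$, $w_a = u_a - v_a$ for the solutions of \eqref{Damped-Wave-Equation}, of \eqref{Backward-Problem} with $h = \Lambda_a f$, and of \eqref{Error-Problem}, all attached to the coefficient $a$, so that $K_a f = w_a(0)$. The essential device is to compare the \emph{error functions} $w_a$ and $w_0$ rather than the backward solutions $v_a$ and $v_0$: put $\zeta = w_0 - w_a$. Using $\square_a w_a = 0 = \square_0 w_0$ one checks that $\zeta$ solves the backward mixed problem
\[
 \square_a \zeta = a\,\partial_t w_0 \text{ in } (0,T)\times\Omega, \quad \zeta|_{t=T} = (\phi_a - \phi_0) - (u_a - u_0)(T), \quad \partial_t\zeta|_{t=T} = -\partial_t(u_a - u_0)(T), \quad \zeta|_{(0,T)\times\Gamma} = 0 ,
\]
with $\zeta(0) = w_0(0) - w_a(0)$, so $\|\zeta(0)\|_{H_D} = \|K_a f - K_0 f\|_{H_D}$. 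The virtue of comparing the $w$'s is precisely that $\zeta$ carries \emph{zero} boundary data; consequently $\zeta$ can be estimated by purely interior energy methods, and one never needs to control $\Lambda_a f - \Lambda_0 f$ in a norm on $\Gamma$, which would require a hidden-regularity argument.

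It remains to see that the forcing and the final data of the $\zeta$-problem are all $O(\|a\|_\infty)$. The difference $u_a - u_0$ solves $\square_a(u_a - u_0) = -a\,\partial_t u_0$ on $(0,\infty)\times\R^n$ with Cauchy data $(0, -af)$; since $a \ge 0$, the energy inequality for the damped Cauchy problem, energy conservation for $u_0$, and Poincar\'e's inequality give $\sup_t \|(u_a - u_0, \partial_t(u_a - u_0))(t)\|_{\mathcal{H}} \le C\|a\|_\infty\|f\|_{H_D}$, and integrating $\partial_t$ in time from the zero initial position, also $\|(u_a - u_0)(T)\|_{H^1(\Omega)} \le C\|a\|_\infty\|f\|_{H_D}$. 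An elliptic (harmonic-extension) estimate then yields $\|\phi_a - \phi_0\|_{H^1(\Omega)} \le C\|(u_a - u_0)(T)|_\Gamma\|_{H^{1/2}(\Gamma)} \le C\|a\|_\infty\|f\|_{H_D}$, so the final data of $\zeta$ has $\mathcal{H}$-norm $\le C\|a\|_\infty\|f\|_{H_D}$. For the forcing, energy conservation for $w_0$ together with $E_\Omega(w_0, T) \le E_\Omega(u_0, T) \le C\|f\|_{H_D}^2$ (harmonicity of $\phi_0$, as in the proof of Lemma \ref{Error-Norm}) gives $\|a\,\partial_t w_0\|_{L^1(0,T;L^2)} \le CT\|a\|_\infty\|f\|_{H_D}$. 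Finally, apply Proposition \ref{Apriori-Theorem} to the time-reversed $\zeta$-problem --- under $t \mapsto T - t$ the operator $\square_a$ becomes $\square_{-a}$, and $\|{-a}\|_\infty = \|a\|_\infty$, so the factor $e^{T\|a\|_\infty}$ is unaffected --- to get
\begin{align*}
 \|K_a f - K_0 f\|_{H_D} &= \|\zeta(0)\|_{H_D} \le \|(\zeta, \partial_t\zeta)(0)\|_{\mathcal{H}} \\
 &\le C e^{T\|a\|_\infty}\bigl( \|a\,\partial_t w_0\|_{L^1(0,T;L^2)} + \|\zeta(T)\|_{H^1(\Omega)} + \|\partial_t\zeta(T)\|_{L^2(\Omega)} \bigr) \\
 &\le C\|a\|_\infty\|f\|_{H_D} ,
\end{align*}
with $C$ independent of $a$ for $\|a\|_\infty \le 1$. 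This is the claimed bound, and the theorem follows.

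The one genuinely delicate point, I expect, is the choice of what to compare: comparing $v_a$ with $v_0$ directly produces the inhomogeneous boundary term $\Lambda_a f - \Lambda_0 f$ and forces a sharp trace estimate on $\Gamma$, whereas comparing the error functions $w_a$ and $w_0$ keeps the boundary data zero and reduces everything to interior energy identities plus one elliptic estimate. A secondary point requiring care is that Proposition \ref{Apriori-Theorem}, stated for the forward Cauchy--Dirichlet problem, must be invoked for the backward problem; the accompanying sign change of the damping term is harmless because the a priori estimate already carries the factor $e^{T\|a\|_\infty}$. Everything else is routine bookkeeping with energy estimates and Poincar\'e/trace inequalities.
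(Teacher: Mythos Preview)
Your proposal is correct and follows the same overall strategy as the paper: establish continuity of $a\mapsto K_a$ at $a=0$ in operator norm and combine with $\|K_0\|<1$. The paper's accompanying Proposition likewise compares the error functions $\bar w = w_0 - w_a$ (with zero lateral data) rather than the backward solutions, exactly for the reason you single out.

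There is a minor but genuine technical difference worth noting. The paper writes the difference equations with the \emph{undamped} operator, obtaining $\square_0\bar w = a\,\partial_t w_a$ and $\square_0\bar u = a\,\partial_t u_a$; the forcing then involves the damped solutions, which are controlled via Lemma~\ref{Error-Norm} and a Duhamel argument. You instead write the difference equations with the damped operator $\square_a$, obtaining forcing $a\,\partial_t w_0$ and $-a\,\partial_t u_0$; the undamped quantities are then controlled by exact energy conservation, which is slightly cleaner (no recourse to Lemma~\ref{Error-Norm} or Duhamel). The price you pay is that Proposition~\ref{Apriori-Theorem} must be invoked for the backward $\square_a$-problem via $t\mapsto T-t$ and $\square_{-a}$, a step the paper avoids because $\square_0$ is time-symmetric. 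Both routes lead to the same estimate $\|K_a - K_0\|\le C\|a\|_\infty$ (for bounded $\|a\|_\infty$), and the conclusion of the theorem follows identically.
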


To show this, we use the continuity of the map $a \mapsto K_a$.
\begin{prop}
 Fix $a_0 > 0$. Then there exists $C > 0$ such that for all $a$ with $||a||_\infty \le a_0$,
 \[
  ||K_0 f - K_a f||_{H_D} \le Ca_0(1+a_0^2)^{1/2} e^{Ta_0}||f||_{H_D}
 \]
\end{prop}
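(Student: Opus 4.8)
The plan is to compare the two error functions $w_a = u_a - v_a$ (solving \eqref{Error-Problem} with attenuation $a$) and $w_0 = u_0 - v_0$ (the undamped case), where $u_a$ solves \eqref{Damped-Wave-Equation} and $v_a$ solves \eqref{Backward-Problem} with $h = \Lambda_a f$. Since $K_a f = w_a(0)$ and $K_0 f = w_0(0)$, it suffices to estimate $\|w_0(0) - w_a(0)\|_{H_D}$, and for this I would bound $\|(w_0 - w_a)(t)\|_{\mathcal H}$ uniformly in $t$ using Proposition \ref{Apriori-Theorem}. The difference $\tilde w = w_0 - w_a$ satisfies a damped wave equation (with coefficient $a$, say) whose forcing term and Cauchy/boundary data all measure, in one way or another, the discrepancy between the $a$-dynamics and the $0$-dynamics; the whole argument reduces to showing each of these source terms is $O(\|a\|_\infty)$ in the appropriate norm.

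Concretely, I would proceed in three steps. \emph{Step 1: propagate the perturbation forward.} Write $\square_0 \tilde w = \square_0 w_0 - \square_0 w_a = -\square_0 w_a$. Since $\square_a w_a = 0$ and $\square_0 = \square_a - a\partial_t$, we get $\square_0 w_a = -a\partial_t w_a$, so $\square_0 \tilde w = a\partial_t w_a$, and $\|a\partial_t w_a\|_{L^1(0,T;L^2)} \le T\|a\|_\infty \sup_t \|\partial_t w_a(t)\|_{L^2}$, which is $O(\|a\|_\infty)\|f\|_{H_D}$ by Lemma \ref{Energy-Estimate} and Lemma \ref{Error-Norm}. \emph{Step 2: control the data at $t=T$.} The Cauchy data of $\tilde w$ at $t=T$ is $(u_0|_{t=T} - \phi_0) - (u_a|_{t=T} - \phi_a)$ and $\partial_t u_0|_{t=T} - \partial_t u_a|_{t=T}$. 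Here $u_0 - u_a$ solves a damped wave equation with zero Cauchy data at $t=0$ except for the $\partial_t u$ jump $-af$, forced by $a\partial_t u_a$; again both contributions are $O(\|a\|_\infty)\|f\|_{H_D}$ by the a priori estimate, and the harmonic extensions $\phi_0, \phi_a$ differ only through the boundary traces $h_0|_{t=T}$ vs.\ $h_a|_{t=T}$, controlled by the trace theorem. \emph{Step 3: control the boundary data.} The boundary value of $\tilde w$ on $(0,T)\times\Gamma$ is $(h_0 - h_a) - (h_0 - h_a) = 0$ if we are careful --- wait, more precisely $w_0|_\Gamma = 0$ and $w_a|_\Gamma = 0$ by construction of \eqref{Error-Problem}, so $\tilde w|_\Gamma = 0$ and this term drops out entirely. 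Assembling these through Proposition \ref{Apriori-Theorem} gives $\sup_t \|\tilde w(t)\|_{\mathcal H} \le C e^{T\|a\|_\infty}\|a\|_\infty (1 + \|a\|_\infty^2)^{1/2}\|f\|_{H_D}$, and restricting to $\|a\|_\infty \le a_0$ yields the stated bound with $a_0$ in place of $\|a\|_\infty$ in the constant factors (absorbing the $(1+\|a\|_\infty^2)^{1/2}$ into $(1+a_0^2)^{1/2}$).

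The main obstacle I anticipate is bookkeeping the \emph{two} places where $a$ enters: once in the forward solve $u_a$ (so that $\Lambda_a f$ itself differs from $\Lambda_0 f$), and once in the backward solve $v_a$ through the operator $\square_a$ and through the mixed boundary data $h = \Lambda_a f$. A clean way to handle this is to split $K_0 f - K_a f$ as $(A_0\Lambda_0 f - A_0\Lambda_a f) + (A_0\Lambda_a f - A_a\Lambda_a f)$: the first difference is $A_0$ applied to $(\Lambda_0 - \Lambda_a)f$, which by boundedness of $A_0$ (via Proposition \ref{Apriori-Theorem} applied to \eqref{Backward-Problem} with zero Cauchy data) reduces to estimating $\|(\Lambda_0 - \Lambda_a)f\|_{H^1((0,T)\times\Gamma)}$, i.e.\ a trace estimate on $u_0 - u_a$; the second difference fixes the data and varies only the operator, so it is governed by $\square_0 - \square_a = -a\partial_t$ as in Step 1. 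I expect the trace estimate on $u_0 - u_a$ to require the regularity hypothesis flagged in Proposition \ref{Apriori-Theorem} (``provided the boundary data are sufficiently regular''), so the proposition should be invoked with that caveat and, if necessary, a density argument used to pass to general $f \in H_D(\Omega)$.
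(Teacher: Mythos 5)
Your proposal is correct and follows essentially the same route as the paper: form $\bar w = w_0 - w_a$, observe $\square_0\bar w = a\partial_t w_a$ with zero lateral data and Cauchy data at $t=T$ controlled through $u_0-u_a$, then apply Proposition \ref{Apriori-Theorem} together with Lemmas \ref{Energy-Estimate} and \ref{Error-Norm}. The only minor differences are cosmetic: the paper bounds the $\phi_0-\phi$ contribution by noting that subtracting the harmonic extension of the boundary trace can only decrease the $H_D$ part of the energy (so no trace estimate is needed), and it controls $u_0-u_a$ by Duhamel plus nonincrease of energy for the free Cauchy problem rather than by invoking Proposition \ref{Apriori-Theorem}, which as stated concerns the mixed problem on $\Omega$ and would require the unknown boundary trace of $u_0-u_a$ as input.
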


\begin{proof}
To prove this, we return to system of coupled PDEs defining $K_0$ and $K_a$, in order to compare their difference.
Let $K_0 f = w_0|_{t=0}$, where $w_0$ solves,
\[
 \left\{
 \begin{array}{rcl}
  \square_0 u_0 = 0 & \text{in} & [0, T] \times \mathbb{R}^n, \\
  u_0|_{t=0} = f, & & \\
  \partial_tu_0|_{t=0} = 0, & &
 \end{array}
 \right.
\]
\[
 \left\{
 \begin{array}{rclll}
  \square_0 w_0 & = & 0 & \text{in} & [0, T] \times \Omega, \\
  w_0|_{t=T} & = & u_0|_{t=T} - \phi_0, & & \\
  \partial_t w_0|_{t=T} & = & \partial_t u_0|_{t=T}, & & \\
  w_0|_{\partial\Omega \times [0, T]} & = & 0. & &
 \end{array}
 \right.
\]
As before, $\phi_0$ is the harmonic extension of $u_0|_{\{T\}\times\partial\Omega}$. 
Recall $K_a f = w|_{t=0}$, where $w = v - u$ solves \eqref{Error-Problem}, $v$ solves \eqref{Backward-Problem}, and $u$ solves \eqref{Damped-Wave-Equation}. 
Define $\bar{u} = u_0 - u$ and $\bar{w} = w_0 - w$. 
Then these functions satisfy the system of PDE
 \[
  \left\{
  \begin{array}{rclll}
   \square_0 \bar{u} & = & a\partial_t u & \text{in} & [0, T] \times \mathbb{R}^n, \\
   \bar{u}|_{t=0} & = & 0, & & \\
   \partial_t\bar{u}|_{t=0} & = & af, & &
  \end{array}
  \right.
 \]
 \[
  \left\{
  \begin{array}{rclll}
   \square_0 \bar{w} & = & a\partial_t w & \text{in} & [0, T] \times \Omega, \\
   \bar{w}|_{t=T} & = & \bar{u}|_{t=T} - (\phi_0 - \phi), & & \\
   \partial_t \bar{w}|_{t=T} & = & \partial_t \bar{u}|_{t=T}, & & \\
   \bar{w}|_{\partial\Omega \times [0, T]} & = & 0. & &
  \end{array}
  \right.
 \]

Our interest is in $E_\Omega(\bar{w}, 0)$, and to determine it we make use of Proposition \ref{Apriori-Theorem}.
Note that $(\phi_0 - \phi)$ is harmonic.
It is also equal to zero on $\Gamma$, and therefore compatible with the Cauchy data.
As before $E_\Omega(\bar{u}, T)$ is a bound on the energy of the Cauchy data.
By Proposition \ref{Apriori-Theorem},
\[
 E_\Omega(\bar{w}, 0) \le C \left\{ ||a\partial_t w||_{L^1(0, T; L^2(\Omega))}^2 + E_\Omega(\bar{u}, T) \right\}.
\]

Concerning the first term, by Jensen's inequality,
\[
 ||a\partial_t w||_{L^1(0, T; L^2(\Omega))}^2 \le ||a||_\infty^2 \int_0^T ||\partial_t w(s, \cdot)||_{L^2(\Omega)}^2\, dx\, ds.
\]
Then by Lemma \ref{Error-Norm}, we have
\[
 ||\partial_t w(s, \cdot)||_{L^2(\Omega)}^2 \le E_\Omega(w, 0) \le (1 + C||a||_\infty^2)e^{2T||a||_\infty}||f||_{H_D}^2.
\]
Hence
\[
 ||a\partial_t w||_{L^1(0, T; L^2(\Omega))} \le a_0(1 + Ca_0^2)^{1/2}e^{Ta_0}||f||_{H_D}
\]

To estimate the second term, we use Duhamel's principle to represent $\bar{u}$ as the integral
\[
 \bar{u}(t, x) = \int_0^t u(s; t, x)\, ds
\]
where $u(s; t, x)$ satisfies
\[
 \left\{
 \begin{array}{rclll}
  \square_0 u(s; t, x) & = & 0 & \text{in} & [s, t] \times \mathbb{R}^n, \\
  u(s; s, x) & = & af, & & \\
  \partial_tu(s; s, x) & = & a\partial_t u(s, x). & &
 \end{array}
 \right.
\]

By nonincrease of energy we have
\[
 E_\Omega(u(s; \cdot), t) \le E_\Omega(u(s; \cdot), s) \le ||a||_\infty^2 \left[ ||f||_{H_D}^2 + E_\Omega(u, s) \right]
\]
and as before $E_\Omega(u, s) \le (1 + C||a||^2_\infty)e^{2T||a||_\infty}||f||_{H_D}^2$.
Hence by Jensen's inequality again,
\[
 E_\Omega(\bar{u}, T) \le \int_0^T E_\Omega(u(s; \cdot), T)\, ds \le C||a||_\infty^2(1 + ||a||^2_\infty)e^{2T||a||_\infty} ||f||_{H_D}^2.
\]

Combining the contributions of both terms and Theorem \ref{Apriori-Theorem}, we have
\[
 ||K_0 f - K_a f||_{H_D}^2 \le C||a||_\infty^2(1 + ||a||_\infty^2)e^{2T||a||_\infty} ||f||_{H_D}^2
\]
which completes the estimate. 
Note that the constant $C$ does not depend on $||a||_\infty$.
\end{proof}

Now we return to the proof of Theorem \ref{Convergence}.

\begin{proof}
By the previous proposition,
\[
 ||K_a f|| \le ||K_0 f - K_a f|| + ||K_0 f|| \le ||K_0 f|| + Ca_0(1 + a_0^2)^{1/2}e^{Ta_0}||f||_{H_D}.
\]
By assumption $||K_0|| < 1$. 
As $a_0 \to 0$, the second term tends to zero, as $C$ does not depend on $a_0$. 
Hence there exists $a_0$ sufficiently small such that $||K_a|| < 1$ uniformly. 
\end{proof}

If $||K_a|| < 1$, then the Neumann series of Theorem \ref{Convergence} also implies stability in the case of complete data.

\section{Complete data}

In this section, we use microlocal methods to prove that under some geometric assumptions and with complete boundary data, modified time reversal is stable.
This follows directly from the previous section, provided the attenuation is sufficiently small.
Here we show stability without this assumption.
For convenience, we adopt the notation of \cite{tat1} for unit speed geodesics.
We identify $T^*\Omega \setminus 0$ with $T\Omega \setminus 0$, via the Euclidean inner product.

{\bf Notation.} Let $(x_0, \xi_0) \in T^*\Omega \setminus 0$. 
There are two null bicharacteristics of the damped wave equation passing over $(x_0, \xi_0)$.
Both project to a unit speed geodesic (with respect to $c^{-2}\, dx^2$) passing through $x_0$.
In such a situation, we write $\gamma_{x_0, \xi_0}$ for the unit speed geodesic with $\gamma_{x_0, \xi_0}(0) = x_0$ and $\dot{\gamma}_{x_0,\xi_0}(0) = \xi_0$.

In this section and the following one, we assume a priori that $\supp f$ is bounded away from $\Gamma$.
This can always be accomplished in practice by slightly enlarging the region of interest, if necessary.

\subsection{Uniqueness}

First, we show uniqueness.
Define $d(x, \Gamma)$ to be the infimum of the lengths of curves (with respect to $c^{-2}\, dx^2$) starting at $x$ and ending at $\Gamma$.
In the undamped case, \cite[Theorem 2]{tat1} yields uniqueness under the assumption that $\Omega$ is strictly convex and that $T > T_1(\Omega)$, where 
\[
 T_1(\Omega) = \sup_{x \in \Omega} d(x, \Gamma).
\]
This result exploited the invariance of the wave equation under time reversal.
To compensate for the lack of this symmetry in the damped wave model, we show uniqueness under the assumption $T > 2 T_1(\Omega)$.

\begin{theorem}
\label{Complete-Uniqueness}
Assume that $\Omega$ is strictly convex and that $T > 2T_1(\Omega)$.
Let $\Lambda_a f = 0$. 
Then $f = 0$.
\end{theorem}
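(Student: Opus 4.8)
The plan is to combine a sharp unique continuation argument for $\square_a$ with the backward energy estimate of Lemma~\ref{Energy-Estimate}. Let $u$ be the solution of \eqref{Damped-Wave-Equation} extended by zero to $t<0$; as explained in the introduction it solves $\square_a u = f\delta'(t)$ on $\R^{n+1}$, and since this source is supported in $\{0\}\times\supp f$, the function $u$ solves the homogeneous equation $\square_a u = 0$ on the open set $(0,T)\times\R^n$. As $\square_a$ has the same principal part as the wave operator of $c^{-2}\,dx^2$, solutions of \eqref{Damped-Wave-Equation} propagate at finite speed in that metric. My strategy is to use unique continuation to show that $u$ vanishes on a full time-slab $\{t=t_0\}\times\Omega$ for some interior time $t_0$, and then to run the equation backward from $t_0$ to conclude $f = u(0,\cdot) = 0$.

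For the unique continuation I would first observe that $u$ vanishes outside $\overline\Omega$ for $0<t<T$. Since $\supp f$ is bounded away from $\Gamma$, the Cauchy data $(f,-af)$ vanish on $\R^n\setminus\Omega$, and $u$ vanishes on $(0,T)\times\Gamma$ by the hypothesis $\Lambda_a f = 0$; regarding $\R^n\setminus\overline\Omega$ as an exterior domain with boundary $\Gamma$, the exterior energy of $u$ is nonincreasing and initially zero (finite speed of propagation controlling large $|x|$), so $u\equiv0$ on $(0,T)\times(\R^n\setminus\overline\Omega)$. Thus $u$ is a homogeneous solution of $\square_a u=0$ on $(0,T)\times\R^n$ vanishing on the open subset $(0,T)\times(\R^n\setminus\overline\Omega)$, and Tataru's unique continuation theorem --- as invoked in \cite{tat1} --- propagates the vanishing into $(0,T)\times\Omega$, giving $u(t,x)=0$ whenever $d(x,\Gamma)<\min(t,\,T-t)$. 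In the undamped case one may instead use invariance under $t\mapsto-t$ to extend the vanishing of the boundary data to $(-T,T)\times\Gamma$, which yields $d(x,\Gamma)<T$ at $t=0$ and hence the weaker hypothesis $T>T_1(\Omega)$; the loss of this symmetry in the damped model is exactly what forces $T-t$ to be replaced by $\min(t,T-t)$. Strict convexity of $\Omega$ is used to ensure that $d(\cdot,\Gamma)$ and the distance-minimizing geodesics to $\Gamma$ are well behaved near $\Gamma$, so that this influence region is described correctly.

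Because $T>2T_1(\Omega)$, the interval $(T_1(\Omega),\,T-T_1(\Omega))$ is nonempty; for any $t_0$ in it, $\min(t_0,T-t_0)>T_1(\Omega)\ge d(x,\Gamma)$ for every $x\in\Omega$, so $u$ vanishes on the slab $(T_1(\Omega),\,T-T_1(\Omega))\times\Omega$, and in particular $u(t_0,\cdot)=\partial_t u(t_0,\cdot)=0$ on $\Omega$. The restriction of $u$ to $(0,t_0)\times\Omega$ then solves the mixed problem of Lemma~\ref{Energy-Estimate}, with $T$ replaced by $t_0$ and with vanishing Cauchy and boundary data, so that lemma gives $E_\Omega(u,t)\le e^{2(t_0-t)||a||_\infty}E_\Omega(u,t_0)=0$ for $0\le t\le t_0$. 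Hence $u\equiv0$ on $[0,t_0]\times\Omega$, and therefore $f=u(0,\cdot)=0$.

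The main obstacle is making the unique continuation step rigorous: one must pin down the precise influence region --- in particular confirm that losing the time-reversal symmetry genuinely costs the factor of two in $T$ --- and verify that the limited regularity of $u$, whose Cauchy data lie only in the energy space $\mathcal{H}$, still suffices for Tataru's theorem.
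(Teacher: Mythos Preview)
Your proposal is correct and follows essentially the same route as the paper: vanishing of $u$ in the exterior, Tataru's unique continuation to obtain $u=0$ on the double cone $\{d(x,\Gamma)<\min(t,T-t)\}$ (which is exactly the paper's region $\{d(x,\Gamma)<T/2-|T/2-t|\}$), and then a backward well-posedness estimate from an interior time slice. The only cosmetic difference is that the paper invokes Proposition~\ref{Apriori-Theorem} (with zero data) rather than Lemma~\ref{Energy-Estimate} for the final step, and fixes $t_0=T/2$ instead of an arbitrary $t_0\in(T_1,T-T_1)$; your concerns about regularity and the factor of two are handled in the paper simply by citing the form of Tataru's theorem recorded in \cite{tat1}.
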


\begin{proof}
Let $u$ be the solution of the forward problem \eqref{Damped-Wave-Equation}.
We apply Tataru's theorem \cite{tataru} (actually, the special case \cite[Theorem 4]{tat1}).
Let $U$ be a neighborhood of $x_0 \in \R^n \setminus \overline{\Omega}$ with $\overline{U} \cap \overline{\Omega} = \emptyset$.
By assumption, $u = 0$ on $\{0\} \times (\R^n \setminus \Omega)$ and $(0, T) \times \Gamma$, and satisfies the damped wave equation on the exterior of $(0, T) \times \Omega$.
Therefore $u = 0$ on $(0, T) \times U$.
By Tataru's theorem, $u = 0$ on the intersection of the backward light cone at $(T, x_0)$ and the forward light cone at $(0, x_0)$.
Repeating this argument for all $x_0$ on the exterior of $\Omega$ sufficiently close to $\Gamma$, we find that $u = 0$ on
\[
 \left\{ (t, x) : d(x, \Gamma) < \frac{T}{2} - \left| \frac{T}{2} - t \right| \right\}
\]
Since $T > 2T_1(\Omega)$, in particular we see that $u = 0$ on a small neighborhood of $\{T/2\} \times \Omega$.
By Proposition \ref{Apriori-Theorem}, $u = 0$ on $(0, T/2) \times \Omega$, and so $f = 0$.
\end{proof}

\subsection{Stability}

To show the reconstruction is stable even when attenuation is large, we turn to microlocal analysis.
When $a = 0$, stability was shown in \cite[Theorem 3]{tat1} under the assumption that $T$ was sufficiently large so that for every $(x, \xi) \in S^*\Omega$, either $\gamma_{x, \xi}$ or $\gamma_{x, -\xi}$ hit $\Gamma$ before time $T$.
Recall from the previous section that $T_0$ is the supremum of the lengths of geodesics in $(\Omega, c^{-2}\, dx^2)$.
Here we present a stability estimate in the special case where $c^{-2}\, dx^2$ is non-trapping, i.e., $T_0 < \infty$.

That this assumption is not necessary for stability follows from the result for partial data in the next section.
However, the proof in this case is simpler.
Notice that in the non-trapping case, if $T > T_0(\Omega)$, then $T > 2T_1(\Omega)$ as well, and so Theorem \ref{Complete-Uniqueness} implies that $\Lambda_a$ is injective.

In the following theorem, let $\chi \in \cinf(\R \times \Gamma)$ be a fixed cutoff function equal to one on $[0, T_0] \times \Gamma$ and zero in a neighborhood of $\{T\} \times \Gamma$.

\begin{theorem}
\label{Complete-Stability}
Assume $T_0 < T < \infty$. Then the following are true:
\begin{enumerate}
\item $A_a\Lambda_a : H_D(\Omega) \to H_D(\Omega)$ is Fredholm.
\item $A_a\chi\Lambda_a = I + R_a$, where $R_a$ is a smoothing operator.
\item There exists a constant $C > 0$ such that
\[ ||f||_{H_D} \le C||\Lambda_a f||_{H^1}. \]
\end{enumerate}
\end{theorem}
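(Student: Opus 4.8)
The plan is to prove statement (2) first; parts (1) and (3) then follow from soft functional analysis together with the uniqueness result, Theorem~\ref{Complete-Uniqueness}. The geometric heart of (2) is the escape of singularities. Since $\supp f$ is bounded away from $\Gamma$, the wavefront sets of the Cauchy data $(f,-af)$ lie in $T^*\Omega\setminus 0$. The damped operator $\square_a$ has the same principal part as $\square_0$, so by propagation of singularities its singularities travel along the unit-speed geodesics of $c^{-2}\,dx^2$; as the metric is non-trapping and $\Omega$ is strictly convex (so a geodesic leaving $\Omega$ does not return), every such geodesic has exited $\overline\Omega$ strictly before time $T_0$. Hence the free-space solution $u$ of \eqref{Damped-Wave-Equation} is smooth on a neighbourhood of $[T_0,T]\times\overline\Omega$, and moreover the map $f\mapsto u|_{[T_0,T]\times\overline\Omega}$ is \emph{smoothing}: the canonical relation of the solution operator composed with restriction to that region is empty, so this map is bounded from $H_D(\Omega)$ into $H^s$ for every $s$. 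This is the quantitative version of the escape of singularities also used in \cite{tat1}.

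Next I would compare $u$ with the solution $v$ of the backward problem \eqref{Backward-Problem} with lateral data $\chi\Lambda_a f$. Because $\chi$ vanishes in a neighbourhood of $\{T\}\times\Gamma$, the harmonic extension appearing in the definition of $A_a$ is zero, so $A_a\chi\Lambda_a f=v|_{t=0}$, and $w:=u-v$ solves $\square_a w=0$ on $(0,T)\times\Omega$ with Cauchy data $(u,\partial_t u)|_{t=T}$ and lateral data $(1-\chi)\,u|_{(0,T)\times\Gamma}$. All of these data are smooth, being restrictions of $u$ over the region $t\ge T_0$, and---crucially, because $\chi$ and all its $t$-derivatives vanish at $\{T\}\times\Gamma$---the compatibility conditions at that corner hold to all orders, being exactly the ones satisfied by the genuine solution $u$. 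Standard regularity for the mixed hyperbolic problem on the smooth domain $\Omega$ then gives $w\in C^\infty([0,T]\times\overline\Omega)$, and $f\mapsto w$ is again smoothing. Hence $R_a=A_a\chi\Lambda_a-I=-\,w|_{t=0}$ is a smoothing operator, which is (2).

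For (1), write $A_a\Lambda_a=A_a\chi\Lambda_a+A_a(1-\chi)\Lambda_a=(I+R_a)+S_a$ with $S_a:=A_a(1-\chi)\Lambda_a$. By the first paragraph, $f\mapsto(1-\chi)\Lambda_a f$ is smoothing, since $\Lambda_a f$ is smooth on $\{t\ge T_0\}\supset\supp(1-\chi)$, and so is $f\mapsto$ (the harmonic extension of its trace at $t=T$). Applied afterwards, $A_a$ maps $H^s((0,T)\times\Gamma)$ into $H^{1+\delta}(\Omega)$ for some fixed $\delta>0$ and all large $s$: the harmonic extension in \eqref{Backward-Problem} makes the corner data order-$0$ compatible, so the backward solution has regularity strictly above the energy level, the gain being limited only by the residual order-$1$ incompatibility $\partial_t v|_{t=T}=0$. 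Thus $S_a$ maps $H_D(\Omega)$ into $H^{1+\delta}(\Omega)$ and is compact by Rellich's theorem; the same (indeed cleaner) argument makes $R_a$ compact, so $A_a\Lambda_a=I+(R_a+S_a)$ is a compact perturbation of the identity, hence Fredholm. Its boundedness $H_D(\Omega)\to H_D(\Omega)$ comes from Proposition~\ref{Apriori-Theorem} and the trace bound $||\phi||_{H^1(\Omega)}\le C||h||_{H^1((0,T)\times\Gamma)}$. For (3), Proposition~\ref{Apriori-Theorem} and (2) give
\[
 ||f||_{H_D}\ \le\ ||A_a\chi\Lambda_a f||_{H_D}+||R_a f||_{H_D}\ \le\ C||\Lambda_a f||_{H^1}+||R_a f||_{H_D}.
\]
If the claimed inequality failed, choose $f_n$ with $||f_n||_{H_D}=1$ and $||\Lambda_a f_n||_{H^1}\to0$; compactness of $R_a$ gives a subsequence with $R_a f_n\to g$, while $||A_a\chi\Lambda_a f_n||_{H_D}\le C||\Lambda_a f_n||_{H^1}\to0$ forces $f_n\to-g$, so $||g||_{H_D}=1$ and $\Lambda_a g=\lim\Lambda_a f_n=0$. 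Since $T_0<T$ implies $2T_1(\Omega)<T$, Theorem~\ref{Complete-Uniqueness} gives $g=0$, a contradiction.

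I expect the main obstacle to be part (2), and within it the two regularity inputs behind the first two paragraphs: upgrading ``propagation of singularities plus non-trapping'' to the operator bound $H_D(\Omega)\to H^s$ for every $s$---in particular a careful treatment of rays grazing $\Gamma$, which strict convexity is needed to handle---and the smoothness of $w$ up to the boundary, which relies on the full compatibility hierarchy for the mixed problem on $\Omega$. The cutoff $\chi$ is what makes that hierarchy hold; dropping it leaves only the weaker, Fredholm-level regularity statement reflected in the treatment of $S_a$ above. Once (2) is in hand, parts (1) and (3) are routine.
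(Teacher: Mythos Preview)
Your argument is essentially the paper's, with only cosmetic differences in order and packaging. Two remarks.

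For part (1) you take a detour through the decomposition $A_a\Lambda_a=(I+R_a)+S_a$ and then argue that $S_a$ gains $\delta>0$ derivatives above the energy level because of order-$0$ (but not order-$1$) corner compatibility. This $\delta$-gain claim is both delicate and unnecessary: since $f\mapsto(1-\chi)\Lambda_a f$ is smoothing into $H^s((0,T)\times\Gamma)$ for every $s$, it factors through the compact inclusion $H^2\hookrightarrow H^1$, and then the ordinary boundedness of $A_a:H^1\to H_D$ (Proposition~\ref{Apriori-Theorem}) already makes $S_a$ compact. The paper avoids the decomposition altogether: it shows directly that $K_a=I-A_a\Lambda_a$ is compact, because the Cauchy data $(u|_{t=T}-\phi,\partial_t u|_{t=T})$ at $t=T$ are smooth (propagation of singularities, $T>T_0$) and the backward solution map from $\mathcal H$ to $H_D$ is bounded. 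That is the cleaner route.

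Your anticipated obstacle---grazing rays at $\Gamma$ and the need for strict convexity to control them---does not arise here. The forward problem \eqref{Damped-Wave-Equation} is posed on all of $\R^n$; there is no boundary for the bicharacteristics of $u$ to graze, and propagation of singularities is the unconditional interior statement. Strict convexity of $\Omega$ enters the present theorem only through the appeal to Theorem~\ref{Complete-Uniqueness} in part (3), exactly as in the paper; your contradiction argument there is the explicit form of the proposition from \cite{taylor1} that the paper cites.
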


\begin{proof}
First, we show $A_a\Lambda_a$ is Fredholm.
In the second section, we considered the error operator $K_a = f - A_a\Lambda_a f$.
It remains to show that $K_a$ is compact.

Let $u$ be the solution of the forward problem \eqref{Damped-Wave-Equation}.
By propagation of singularities, $\WF(u)$ is the union of null bicharacteristics lying over $\WF(f)$.
Let $h = \Lambda_a f$.
As we assume $T > T_0(\Omega)$, $\WF(u)$ is empty over a neighborhood of $\{T\} \times \Omega$.
Therefore the operator $f \mapsto (u|_{t=T} - \phi, \partial_t u|_{t=T})$ maps $H_D(\Omega)$ to $\cinf(\Omega) \times \cinf(\Omega)$, and hence is compact.
Finally, Proposition \ref{Apriori-Theorem} implies that the operator $(u|_{t=T} - \phi, \partial_t u|_{t=T}) \mapsto w|_{t=0}$ is bounded from $\mathcal{H} \to H_D(\Omega)$.
Therefore $K_a$ is compact, and $A_a\Lambda_a$ is Fredholm.

Let $v$ be the solution of the corresponding backward problem \eqref{Backward-Problem} with $h = \chi\Lambda_a f$, and $w = u - v$ as before.
Evidently $v|_{t=T} = 0$.
Therefore $w$ solves \eqref{Error-Problem} with smooth boundary data, compatible to arbitrary order.
If we define $R_a f = w|_{t=0}$, then $R_a$ maps $H_D(\Omega)$ to $\ccinf(\Omega)$, and $A_a\chi\Lambda_a = I + R_a$.

To show the first inequality, the previous equation shows
\[
 ||f||_{H_D} \le ||A_a\chi\Lambda_a f||_{H_D} + ||R_a f||_{L^2}.
\]
By Lemma \ref{Error-Norm}, $A_a$ is bounded from $H^1(\Gamma) \to H_D(\Omega)$, and so we have
\[
 ||f||_{H_D} \le C^\prime(||\Lambda_a f||_{H^1} + ||f||_{L^2}).
\]
To obtain the stability estimate in the theorem, we recall that since $T > 2T_1(\Omega)$, $\Lambda_a$ is injective.
So by \cite[Proposition V.3.1]{taylor1}, there is another constant $C > 0$ such that 
\[ ||f||_{H_D} \le C||\Lambda_a f||_{H^1}. \]
Therefore the reconstruction is stable.

If in addition $\Omega$ is strictly convex (with respect to the Euclidean metric), the results of the next section show that singularities that propagate through the boundary may be stably recovered; the larger $T$ is, the more singularities become visible.
\end{proof}

Concerning the second part of the theorem, in principle one could choose a different modification of the Cauchy data in \eqref{Backward-Problem} that would be compatible with $h$ to at least first order.
In that case, $K_a$ would be smoothing of at least first order, and no cut-off function would be necessary.

\section{Partial data}

In applications it is often only possible to obtain data on a proper subset of the boundary.
Let $\Gamma^\prime \subset \Gamma$ be relatively open.
We model partial measurements by the operator $\chi\Lambda_a$, where $\chi \in \cinf(\R \times \Gamma)$ is a cut-off function positive on $(0, T) \times \Gamma^\prime$ and zero elsewhere.
In this section, we only show that ``visible'' singularities (in the sense of \cite{kuchment1, tat1}) may be stably recovered from $\chi\Lambda_a f$, by proving an estimate of the form
\[
 ||f||_{H_D} \le C(||\Lambda_a f||_{H^1} + ||f||_{L^2}).
\]
If in addition, we know that $\Lambda_a$ is injective, we could prove stability as in the complete data case.
In the undamped case, \cite[Proposition 2]{tat1} shows uniqueness provided that $d(x, \Gamma^\prime) < T$.
However, their result relies on being able to extend solutions of the wave equation to $(-T, 0) \times \Omega$ by time reversal.
It is not clear how to modify their proof to account for attenuation.
Therefore, we constrain ourselves to showing stable recovery of visible singularities.

Let $\mathcal{K}$ be an open set strictly contained in $\Omega$.
By analogy with the complete data case, let $T_2(\mathcal{K}, \Gamma^\prime)$ be the least time necessary for all $(x, \xi) \in S^*\mathcal{K}$ to have the property that at least one of $\gamma_{x, \xi}$ and $\gamma_{x, -\xi}$ reaches $\Gamma^\prime$ before time $T_2(\mathcal{K}, \Gamma^\prime)$.
In other words, we can say that $\mathcal{K}$ is visible from $\Gamma^\prime$.
We will assume that $T > T_2(\mathcal{K}, \Gamma^\prime)$.
Our goal in this section is to show the following:

\begin{theorem}
\label{Partial-Stability}
Assume $\Omega$ is strictly convex (with respect to the Euclidean metric), and that $T_2(\mathcal{K}, \Gamma^\prime) < T < \infty$.
If in addition, $\supp f \subset \mathcal{K}$, then the following are true:

\begin{enumerate}
\item $A_a\chi\Lambda_a : H_D(\mathcal{K}) \to H_D(\Omega)$ is Fredholm.
\item $A_a\chi\Lambda_a : H_D(\Omega) \to H_D(\Omega)$ is an elliptic pseudodifferential operator of order zero.
\item There exists $C > 0$ independent of $a$ such that
\[
 ||f||_{H_D} \le Ce^{T||a||_\infty}(||\Lambda_a f||_{H^1} + ||f||_{L^2}).
\]
\end{enumerate}
\end{theorem}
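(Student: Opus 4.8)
Throughout, recall that $\square_a$ has the same principal part as $\square_0$, so the null bicharacteristic flow --- and hence every canonical relation appearing below --- is exactly that of the undamped case; damping only alters the transport amplitude, by a factor of the form $e^{-\frac12\int a\, ds}$ along a forward bicharacteristic and $e^{+\frac12\int a\, ds}$ along a backward one. The plan is to adapt the microlocal analysis behind \cite[Theorem 3]{tat1} to the damped equation and to partial data, the new points being the treatment of the damping amplitude and the bookkeeping of visible directions.

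First I would realize $A_a\chi\Lambda_a$ as a composition of Fourier integral operators. The forward solution operator $f \mapsto u|_{(0,T)\times\overline\Omega}$ is an FIO (a sum of two, associated to the two sheets of the characteristic variety); composing with the trace onto $(0,T)\times\Gamma$ and multiplication by $\chi$ gives $\chi\Lambda_a$. Since $\supp f$ is bounded away from $\Gamma$ and $\Omega$ is strictly convex, the bicharacteristics carrying $\WF(u)$ are non-grazing at $\Gamma$ outside a negligible set, so the backward mixed problem \eqref{Backward-Problem} --- in which, since $\chi$ vanishes near $\{T\}\times\Gamma$, the Cauchy data at $t=T$ and the harmonic extension $\phi$ are zero --- has an FIO solution operator, analyzed by the standard broken-bicharacteristic parametrix for the Dirichlet problem; strict convexity rules out trapped or repeatedly reflected broken rays within time $T$. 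Restricting to $t=0$ gives $h \mapsto A_a h$ as an FIO.

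Next I would compose the canonical relations. A singularity $(x_0,\xi_0)\in\WF(f)$ launches two bicharacteristics from $(0,x_0,\xi_0)$, projecting to $\gamma_{x_0,\xi_0}$ and $\gamma_{x_0,-\xi_0}$; $\chi\Lambda_a$ keeps those meeting $(0,T)\times\Gamma'$ where $\chi>0$, and $A_a$ transports each such boundary singularity back along the reversed bicharacteristic to $(0,x_0,\xi_0)$. The two canonical relations therefore compose to a subset of the diagonal of $(T^*\Omega\setminus 0)^2$, so $A_a\chi\Lambda_a$ is a pseudodifferential operator of order $0$. Its principal symbol at $(x_0,\xi_0)$ is a sum over the bicharacteristics that are seen of the product of the corresponding forward and backward amplitude factors; along each geodesic segment these are $e^{-\frac12\int a}$ and $e^{+\frac12\int a}$ and cancel, so the symbol coincides with that of $A_0\chi\Lambda_0$ --- in particular it does not depend on $a$ --- and since $T>T_2(\mathcal K,\Gamma')$ at least one bicharacteristic over every $(x_0,\xi_0)\in S^*\mathcal K$ is seen, so the symbol is bounded below by a positive constant over $T^*\mathcal K$. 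This gives (2), ellipticity over the visible region.

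Finally, ellipticity of $A_a\chi\Lambda_a$ over $T^*\mathcal K$ provides a properly supported order-$0$ parametrix $B$ with $B A_a\chi\Lambda_a = I + S$, $S$ smoothing, on distributions with wavefront set in $T^*\mathcal K$; in particular $A_a\chi\Lambda_a : H_D(\mathcal K)\to H_D(\Omega)$ has finite-dimensional kernel and closed range, establishing (1), and
\[
 ||f||_{H_D} \le C(||A_a\chi\Lambda_a f||_{H_D} + ||f||_{L^2}),
\]
with $C$ independent of $a$ because the principal symbol is. Since $A_a\chi\Lambda_a f = v|_{t=0}$ with $v$ solving \eqref{Backward-Problem} for $h=\chi\Lambda_a f$, Proposition \ref{Apriori-Theorem} applied to the backward problem, together with the trace estimate $||\chi\Lambda_a f||_{H^1((0,T)\times\Gamma)} \le C||\Lambda_a f||_{H^1}$, gives $||A_a\chi\Lambda_a f||_{H_D} \le C e^{T||a||_\infty}||\Lambda_a f||_{H^1}$; combining the two displays yields (3). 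I expect the main obstacle to be the FIO construction for the backward mixed problem together with the reflection bookkeeping: one must verify that strict convexity eliminates the problematic broken and grazing rays in time $T$, that the grazing set does not enter the principal symbol, and that the damping amplitudes cancel as claimed --- the last being what makes the constant in (3) of exactly the form $C e^{T||a||_\infty}$.
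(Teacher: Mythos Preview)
Your proposal is correct and follows essentially the same route as the paper: geometric-optics/FIO parametrices for the forward and backward problems, composition to an order-zero $\Psi$DO whose principal symbol is $a$-independent and equal to $\tfrac12\bigl[\chi(\tau_{y,\eta},\gamma_{y,\eta}(\tau_{y,\eta}))+\chi(\tau_{y,-\eta},\gamma_{y,-\eta}(\tau_{y,-\eta}))\bigr]$, ellipticity over $T^*\mathcal K$ from the visibility hypothesis $T>T_2(\mathcal K,\Gamma')$, and part~(3) via elliptic regularity together with Proposition~\ref{Apriori-Theorem}. The paper phrases the $a$-independence of the principal symbol as ``the leading amplitudes of the forward and backward parametrices satisfy the \emph{same} homogeneous transport equation (both problems are $\square_a$), so their ratio---equal to $\chi$ at the boundary---is preserved down to $t=0$,'' which is precisely your $e^{\mp\frac12\int a}$ cancellation in different words.
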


\begin{proof}
The first part follows directly from the second part of Theorem \ref{Complete-Stability}, though one may need to increase $T$ slightly to ensure that $\chi = 0$ near $\{t = T\}$.

To show the second part, we construct a geometric optics solution $u$ of \eqref{Damped-Wave-Equation}, and then microlocally construct a parametrix of \eqref{Backward-Problem} given $h = \chi\Lambda_a$.
We defer the proof of this to the following two lemmas.

Finally, provided the previous part, by elliptic regularity we have the estimate
\[
 ||f||_{H_D} \le C( ||A_a\chi\Lambda_a f||_{H_D} + ||f||_{L^2} ).
\]
By Proposition \ref{Apriori-Theorem}, $||A_a||_{H^1(\Gamma) \to H_D(\Omega)} \le Ce^{T||a||_\infty}$.
\end{proof}

The principal symbol of $\square_a$ factors as $(\tau + c(x)|\eta|)(-\tau + c(x)|\eta|)$.
Our geometric optics solutions will then depend on the existence of two phase functions solving the corresponding eikonal equations.
To simplify the proof of the next two lemmas, we will assume that these equations are solvable on the interval $[0, T]$, i.e., that no caustics form.
Afterward, we will remove this assumption.

\begin{lemma}
\label{Forward-Parametrix}
$\Lambda_a: H_D(\Omega) \to H^1(\Gamma)$ is the sum of two Fourier integral operators, $\Lambda_a^+$ and $\Lambda_a^-$, with canonical relations
\[
 \{ (y, \eta, t, x, \tau, \xi): t = \tau_{y, \pm\eta}, x = \gamma_{y, \pm\eta}(t), \tau = -|\dot{\gamma}_{y, \pm\eta}(t)|, \xi = -\pi(\dot{\gamma}_{y, \pm\eta}(t)) \},
\]
where $\pi : T_x^*\R^n \to T_x^*\Gamma$ is the tangential projection.
\end{lemma}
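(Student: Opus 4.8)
The plan is to construct a microlocal parametrix for the forward problem \eqref{Damped-Wave-Equation} by the standard geometric optics ansatz, and then compose with the trace map onto $(0,T)\times\Gamma$. First I would write the Cauchy data $(f,-af)$ and decompose $f$ microlocally into the sum of two ``half-wave'' pieces: one whose singularities propagate along the bicharacteristics over which $\tau = -c(x)|\eta|$ and one over which $\tau = +c(x)|\eta|$. Concretely, solve the two eikonal equations $\partial_t\varphi_\pm \mp c(x)|\nabla_x\varphi_\pm| = 0$ with initial condition $\varphi_\pm|_{t=0} = x\cdot\eta$; under the no-caustics hypothesis these are globally solvable on $[0,T]$, and their Hamilton flow is exactly the lift of the unit-speed geodesic flow $\gamma_{y,\pm\eta}$ — this is where the geodesics enter the canonical relation. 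Then posit $u = u_+ + u_-$ with $u_\pm = \int e^{i\varphi_\pm(t,x,\eta)} a_\pm(t,x,\eta)\,\hat f(\eta)\,d\eta$, and solve the transport equations for the amplitudes $a_\pm$ recursively; the damping term $a\partial_t u$ contributes only a lower-order (zeroth-order) term to the transport equation, so it modifies the subprincipal symbol but not the phase, hence not the canonical relation. The initial conditions $u|_{t=0}=f$, $\partial_t u|_{t=0}=-af$ fix the leading amplitudes $a_\pm^{(0)}$ (a $2\times 2$ invertible linear system in the two unknowns, solvable because $c>0$).

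Next I would verify that this ansatz produces a genuine solution modulo smoothing: the parametrix $\tilde u = u_+ + u_-$ satisfies $\square_a \tilde u \in \cinf$ and has the correct Cauchy data modulo $\cinf$, so by the energy estimate (nonincrease of energy, or Proposition \ref{Apriori-Theorem} applied to the difference) $u - \tilde u$ is smooth on $(0,T)\times\R^n$. Since $\supp f$ is bounded away from $\Gamma$ by our standing assumption, the singularities of $u$ reach $(0,T)\times\Gamma$ only after traveling a positive distance, and the only contribution to $\Lambda_a f = u|_{(0,T)\times\Gamma}$ comes from bicharacteristics transversal to the boundary (the strict convexity of $\Omega$ and the geometry rule out glancing contributions generically, or one restricts to the transversal part, which is all that is claimed). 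Restricting $u_\pm$ to $t = \varphi$-arrival and to $x\in\Gamma$ is the composition of the FIO with the transversal trace map; the resulting operator $\Lambda_a^\pm$ is an FIO whose canonical relation is the pushforward of $\{t=0\}$-conormal data under the geodesic flow followed by tangential projection, i.e., exactly the set displayed in the statement: $(y,\eta)$ maps to the boundary point $x = \gamma_{y,\pm\eta}(t)$ at the arrival time $t = \tau_{y,\pm\eta}$, with cotangent variables $\tau = -|\dot\gamma_{y,\pm\eta}(t)|$ and $\xi = -\pi(\dot\gamma_{y,\pm\eta}(t))$, the signs coming from the orientation convention on the graph of an FIO.

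Finally I would check the mapping property $\Lambda_a^\pm : H_D(\Omega)\to H^1(\Gamma)$. The order of the FIO: the full solution operator $f\mapsto u|_{(0,T)\times\R^n}$ has order $0$ in the appropriate sense, and the transversal trace onto a hypersurface gains half a derivative, but the output is measured in $H^1$ rather than $H^{1/2}$ because the source is $f(x)\delta'(t)$ — the extra $\partial_t$ in \eqref{Physical-Model} (equivalently the $\partial_t u|_{t=0}$ normalization) raises the order by one, and this is consistent with the already-established boundedness $\Lambda_a : H_D(\Omega)\to H^1((0,T)\times\Gamma)$ cited from \cite{llt}. I expect the main obstacle to be the bookkeeping at the boundary: making precise that the trace map composed with the oscillatory integral is clean (the composition of canonical relations is transversal, so no loss of order beyond the expected half-derivative), and handling the fact that for each $(y,\eta)$ the geodesic may hit $\Gamma$ several times within $[0,T]$, so $\Lambda_a^\pm$ is really a locally finite sum of FIOs, one for each transversal intersection — each with the stated canonical relation. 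The no-caustics assumption is used precisely to keep $\varphi_\pm$ single-valued; its removal (promised afterward) is done by a partition of unity in phase space subordinate to a cover by caustic-free time intervals, composing the resulting FIOs, which does not change the canonical relation.
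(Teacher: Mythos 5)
Your proposal follows essentially the same route as the paper: a two-term geometric optics parametrix with phases solving the eikonal equations $\mp\partial_t\phi^\pm = c|\nabla_y\phi^\pm|$, amplitudes determined recursively by transport equations with the leading coefficients fixed by the Cauchy data $(f,-af)$, and restriction to $(0,T)\times\Gamma$ to read off the two canonical relations. The additional bookkeeping you flag (trace transversality, multiple boundary hits, removal of the no-caustics hypothesis by iterating over caustic-free intervals) is consistent with what the paper does or defers to its subsequent remark, so the argument is correct and not materially different.
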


\begin{proof}
We will search for a parametrix $u$ for \eqref{Damped-Wave-Equation} of the form
\[
 u(t, y) = (2\pi)^{-n} \sum_{\sigma = \pm} \int e^{i\phi^\sigma(t, y, \eta)} A^\sigma(t, y, \eta) \hat{f}(\eta)\, d\eta,
\]
where $\phi^\pm$ are phase functions and $A^\sigma$ are order zero amplitudes with asymptotic expansion
\[
 A^\sigma(t, y, \eta) \sim \sum_{j \ge 0} A^\sigma_j(t, y, \eta).
\]
where $A^\sigma_j(t, y, \eta)$ is homogeneous of degree $-j$ in $\eta$.
For $u$ to be a parametrix, we must construct $\phi^\sigma$ and $A^\sigma$ so that $\square_a u \in \cinf, u|_{t=0} = f,$ and $\partial_t u|_{t=0} = -af$.
To satisfy the first requirement, we calculate $\square_a u$:
\[
 \square_a u = (2\pi)^{-n} \sum_{\sigma = \pm} \int e^{i\phi^\sigma} \hat{f} [I_2 + I_1 + I_0]\, d\eta,
\]
where
\begin{align*}
I_2 &= -A^\sigma ((\partial_t \phi^\sigma)^2 - c^2|\nabla_y \phi^\sigma|^2), \\
I_1 &= 2i[(\partial_t \phi^\sigma)(\partial_t A^\sigma) - c^2 \nabla_y\phi^\sigma \cdot \nabla_yA^\sigma] + i A^\sigma \square_a \phi^\sigma, \\
I_0 &= \square_a A^\sigma.
\end{align*}

For the residual term to be smooth, the integrand must decay rapidly with respect to $\eta$. 
If we impose the eikonal equation 
\[
 \left\{
 \begin{array}{rcl}
  \mp \partial_t \phi^\pm & = & c|\nabla_y \phi^\pm| \\
  \phi^\pm|_{t=0} & = & y \cdot \eta
 \end{array}
 \right.
\]
and assume for the moment that a solution exists for $t \in [0, T]$, then evidently $I_2 = 0$. 

We recognize the first term of $I_1$ as the transport operator $X^\sigma$ applied to $A^\sigma$, where
\[
 X^\sigma = 2(\partial_t\phi^\sigma) \partial_t - 2c^2\nabla_y \phi^\sigma \cdot \nabla_y
\]

To control the decay of $I_1 + I_1$ in $\eta$, we write $A^\sigma$ as its asymptotic expansion and consider the coefficients separately.
On $A^\sigma_0$, we can impose the transport equation
\[
 X^\sigma A^\sigma_0 + A^\sigma_0 \square_a \phi^\sigma = 0.
\]
The subsequent terms can be taken to satisfy the lower order transport equations
\[
 X^\sigma A^\sigma_j + A^\sigma_j \square_a \phi^\sigma = -\square_a A^\sigma_{j-1}, j \ge 1.
\]

These equations reduce to ordinary differential equations along characteristics -- the characteristics again being unit speed geodesics -- whenever the eikonal equations are solvable, provided we establish Cauchy data at $t = 0$.
To determine the proper Cauchy data to impose, we return to the Cauchy data of \eqref{Damped-Wave-Equation}.

The condition $u|_{t=0} = f$ implies that when $t = 0$,
\[
 f(y) = (2\pi)^{-n} \int e^{iy\cdot\eta} \hat{f}(\eta)[A^+ + A^-]|_{t=0} \, d\eta,
\]
and accordingly at $t = 0$,
\[
 A^+ + A^- = 1.
\]

The condition $\partial_t u|_{t=0} = -af$ implies that when $t = 0$,
\[
 \int e^{iy\cdot\eta}\hat{f}(\eta)(-a(y))\, d\eta = \int e^{iy\cdot\eta}\hat{f}(\eta) \left[ ic|\eta|(-A^+ + A^-) + \partial_t(A^+ + A^-) \right]\, d\eta.
\]
Therefore at $t = 0$,
\[
 ic|\eta|(-A^+ + A^-) + \partial_t(A^+ + A^-) = -a
\]

These two equations yield the following system of boundary conditions at $t = 0$:
\[
 \left\{
 \begin{array}{rcl}
 A^+_0 + A^-_0 & = & 1, \\
 A^+_1 + A^-_1 & = & 0, \\
 A^+_j + A^-_j & = & 0,\ j \ge 2
 \end{array}
 \right.
\ 
 \left\{
 \begin{array}{rcl}
 A^+_0 - A^+_0 & = & 0, \\
 A^+_1 - A^-_1 & = & -a - \partial_t(A^+_0 + A^-_0), \\
 A^+_j - A^+_j & = & - \partial_t(A^+_{j-1} + A^+_{j-1}),\ j\ge 2.
 \end{array}
 \right.
\]
Note that the resulting system of equations may be solved recursively, yielding boundary data for each coefficient of the asymptotic expansion of $A^\pm$.
In particular, $A^+_0(0, y, \eta) = A^-_0(0, y, \eta) = \frac{1}{2}$. 
With this set of boundary data, the recurring system of transport equations stated above may be solved.
This completes the construction of $u$.

The restriction of this parametrix to $(0, T) \times \Gamma$ yields a global representation of $\Lambda_a$ as the sum of two oscillating integrals, up to a smooth residual error.
For $(t, x) \in (0, T) \times \Gamma$, we have
\[
 [\Lambda^\pm_a f](t, x) = (2\pi)^{-n} \int e^{i\phi^\pm(t, x, \eta)} A^\pm(t, x, \eta) \hat{f}(\eta)\, d\eta.
\]
From this we can calculate the canonical relation of each Fourier integral operator.
The characteristic manifold is 
\[ \Sigma^\pm = \{ y = \partial_\eta \phi^\pm(t, x, \eta) \}. \]
It is well-known that $y = \partial_\eta \phi^\pm(t, x, \eta)$ holds when $\gamma_{y, \pm\eta}(t) = x$. 
Therefore, the canonical relation of $\Lambda^\pm_a$, restricted to $T^*\Omega \times T^*((0, T) \times \Gamma)$, is given locally by the graph of the diffeomorphism mapping $(y_0, \eta_0)$ to $(\tau_{y_0, \pm\eta_0}, \gamma_{y_0, \pm\eta_0}(\tau_{y_0, \pm\eta_0}))$.
\end{proof}

Next we perform a similar construction for the backward problem \eqref{Backward-Problem}, in order to analyze $A_a\chi\Lambda_a$.

\begin{lemma}
\label{Backward-Parametrix}
$A_a\chi\Lambda_a : H_D(\Omega) \to H_D(\Omega)$ is a pseudodifferential operator of order zero whose principal symbol is
\[
 \sigma_0(y, \eta) = \frac{1}{2}[\chi(\tau_{y, \eta}, \gamma_{y, \eta}(\tau_{y, \eta})) + \chi(\tau_{y, -\eta}, \gamma_{y, -\eta}(\tau_{y, -\eta}))]
\]
\end{lemma}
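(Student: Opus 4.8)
The plan is to construct a microlocal parametrix for the backward problem \eqref{Backward-Problem} with boundary data $h = \chi\Lambda_a f$ and then read off its composition with the forward parametrix of Lemma \ref{Forward-Parametrix}. First I would recall what that lemma supplies: near any $(y,\eta)$ the forward solution $u$ splits, modulo $\cinf$, as $A^+e^{i\phi^+} + A^-e^{i\phi^-}$ with $A^\sigma_0|_{t=0} = \frac{1}{2}$ and $\phi^\pm|_{t=0} = y\cdot\eta$, and restricting to $(0,T)\times\Gamma$ the $\sigma$-piece of $h$ is a Lagrangian distribution microsupported near the single point $(\tau_{y,\sigma\eta}, \gamma_{y,\sigma\eta}(\tau_{y,\sigma\eta}))$ where the geodesic $\gamma_{y,\sigma\eta}$ leaves $\Omega$, now multiplied by the smooth factor $\chi$ evaluated there. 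Since $\supp f \subset \mathcal K$ is strictly contained in $\Omega$, the solution $u$ is smooth near $\{0\}\times\Gamma$, and since $\chi$ vanishes near $\{t=T\}$, $h$ vanishes near $\{T\}\times\Gamma$; thus the Cauchy data of \eqref{Backward-Problem} is compatible with $h$ to infinite order, and no contribution from the corner $\{0,T\}\times\Gamma$ arises.

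Next I would build $v$ as a finite sum $\sum_\sigma (2\pi)^{-n}\int e^{i\psi^\sigma(t,y,\eta)}B^\sigma(t,y,\eta)\hat f(\eta)\,d\eta$, where each $\psi^\sigma$ solves the \emph{same} eikonal equation as $\phi^\sigma$ but with Cauchy data $\psi^\sigma = \phi^\sigma$ on the lateral boundary $(0,T)\times\Gamma$, and each $B^\sigma$ solves the transport equations of Lemma \ref{Forward-Parametrix} — unchanged because $v$ again satisfies $\square_a v = 0$ — with boundary data $B^\sigma = \chi A^\sigma$ on $\Gamma$. Because $\Omega$ is Euclidean-strictly convex and $c\equiv 1$ near $\Gamma$, the geodesics meet $\Gamma$ transversally and do not re-enter, so the lateral Cauchy problem for the eikonal is solvable near each exit point and the transport ODEs integrate back to $t=0$. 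The hypothesis $T > T_2(\mathcal K,\Gamma')$, together with $\chi$ vanishing near $\{t=T\}$, ensures every relevant bicharacteristic has already exited before the cutoff turns off; hence $v$ can be arranged to have Cauchy data vanishing to infinite order at $t=T$, so it matches \eqref{Backward-Problem} modulo $\cinf$, and by propagation of singularities it agrees with the true backward solution mod $\cinf$. Thus $A_a\chi\Lambda_a f = v|_{t=0}$ up to a smoothing operator.

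Finally I would evaluate at $t=0$. By uniqueness along characteristics for the eikonal equation, $\psi^\sigma = \phi^\sigma$ on the region swept by the shared rays inside $\Omega$, so $\psi^\sigma(0,y,\eta) = \phi^\sigma(0,y,\eta) = y\cdot\eta$; and since $B^\sigma$ and $A^\sigma$ satisfy the same homogeneous transport ODE along each shared ray, the ratio $B^\sigma/A^\sigma$ is constant and equals its boundary value $\chi(\tau_{y,\sigma\eta},\gamma_{y,\sigma\eta}(\tau_{y,\sigma\eta}))$. Therefore $B^\sigma_0(0,y,\eta) = \frac{1}{2}\chi(\tau_{y,\sigma\eta},\gamma_{y,\sigma\eta}(\tau_{y,\sigma\eta}))$, and the Schwartz kernel of $A_a\chi\Lambda_a$ is $\sum_\sigma\int e^{i(y-z)\cdot\eta}B^\sigma(0,y,\eta)\,d\eta$ modulo a smooth kernel. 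This exhibits $A_a\chi\Lambda_a$ as a pseudodifferential operator of order zero with full symbol $\sum_\sigma B^\sigma(0,y,\eta)$ and principal symbol $\frac{1}{2}[\chi(\tau_{y,\eta},\gamma_{y,\eta}(\tau_{y,\eta})) + \chi(\tau_{y,-\eta},\gamma_{y,-\eta}(\tau_{y,-\eta}))]$, as claimed.

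The step I expect to be the main obstacle is the construction in the second paragraph: producing a genuine parametrix for the \emph{backward} mixed problem \eqref{Backward-Problem}, i.e. solving the eikonal equation off the timelike boundary $(0,T)\times\Gamma$ near the non-grazing exit points, checking the transport ODEs can be integrated against the prescribed boundary data back to $t=0$, and ruling out spurious singularities from grazing rays, from the corner $\{0,T\}\times\Gamma$, or — in the general case — from caustics, which I would first exclude as in Lemma \ref{Forward-Parametrix} and then remove afterward by the usual partition-of-unity argument along rays. The bookkeeping of canonical relations and symbols in the other steps is then routine, given Lemma \ref{Forward-Parametrix}.
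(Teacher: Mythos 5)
Your proposal is correct and follows essentially the same route as the paper: build a geometric-optics parametrix for the backward problem with the forward phases $\phi^\pm$ and amplitudes solving the same transport equations with lateral data $\chi A^\pm$, use the zero Cauchy data at $t=T$ together with strict convexity and propagation of singularities (for the mixed problem) to rule out the reflected branch, and evaluate at $t=0$ to read off the principal symbol $\tfrac12[\chi(\tau_{y,\eta},\gamma_{y,\eta}(\tau_{y,\eta}))+\chi(\tau_{y,-\eta},\gamma_{y,-\eta}(\tau_{y,-\eta}))]$. The only difference is presentational: the paper microlocalizes the boundary data near each point of $T^*((0,T)\times\Gamma)$ and discards the reflected geodesic explicitly, whereas you identify your global ansatz with the true backward solution modulo smoothing; the caustics issue is handled in both by the same assume-then-remove argument.
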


\begin{proof}
As in the previous lemma, we begin by constructing a parametrix $v$ of \eqref{Backward-Problem}, assuming again that the eikonal equations are solvable on $(0, T)$.
To proceed, we microlocalize the parametrix near the point $(t_0, x_0, \tau_0, \xi_0) \in T^*((0, T) \times \Gamma)$.
We take $\rho \in \ccinf(\R^{n+1})$ to be a cutoff function supported in a small neighborhood $U$ of $(t_0, x_0)$ such that $\rho(t_0, x_0) = 1$.
If $\tau_0 < 0$, we will take $h = \rho\chi\Lambda_a^+ f$ and assume that $\WF(h)$ is contained in a small conic neighborhood of the point.
If $\tau_0 > 0$, replace $\Lambda_a^+$ with $\Lambda_a^-$ and proceed similarly
As Cauchy data for \eqref{Backward-Problem}, we will take $v|_{t=T} = \partial_t v|_{t=T} = 0$.
The boundary data is smoothly compatible, as we assumed that $\chi$ is supported away from $\{t = T\}$.

Under these considerations, we search for $v$ solving \eqref{Backward-Problem} up to smooth error with the above initial data.
We assume $v$ is of the form
\[
 v(t, y) = (2\pi)^{-n} \int e^{i\psi(t, y, \eta)} B(t, y, \eta) \hat{f}(\eta)\, d\eta,
\]
as suggested by the forward problem.

Propagation of singularities implies that the wavefront set of $v$ should be near the two null bicharacteristics whose projection onto $T_{(t_0, x_0)}^*((0, T) \times \Gamma)$ is $(\tau_0, \xi_0)$.
The assumption of strict convexity implies that there are two distinct geodesics whose tangent at $x_0$ projects to $\xi_0$; one pointing inward, and one pointing outward.

Let $(y_0, \eta_0)$ be related to $(t_0, x_0, \tau_0, \xi_0)$ under the canonical relation of $\Lambda^+_a$. 
Therefore, the unit speed geodesic $\gamma_{y_0, \eta_0}$ is such that $x_0 = \gamma_{y_0, \eta_0}(\tau_{y_0, \eta_0})$.
Evidently, this is the geodesic pointing outward.

The other geodesic is the reflection of this one along $\Gamma$, according to the laws of geometric optics. 
Singularities near it propagate further, perhaps reflecting off the boundary at most finitely many more times (as the boundary is strictly convex), until they intersect $\{t =T\}$.
Since we specified that the Cauchy data is zero there, these reflected broken geodesics carry no singularities.

Since only the null bicharacteristic corresponding to $\gamma_{y_0, \eta_0}$ carries the singularities of $h$, we need only construct $\psi$ and $B$ in a small conic neighborhood of this bicharacteristic.
The boundary data is of the form
\[
 v|_U = \rho \chi \Lambda_a^+ f = (2\pi)^{-n} \int e^{i\phi^+} \rho \chi A^+ \hat{f}\, d\eta.
\]
For $v$ to be a parametrix, we must have
\[
 \left\{
 \begin{array}{rcl}
 |\partial_t \psi|^2 & = & c^2(x)|\nabla_x \psi|^2, \\
 \psi|_U & = & \phi^+|_U.
 \end{array}
 \right.
\]
This holds in particular if we impose $-\partial_t v = c(x)|\nabla_x \psi|$.
Then $\psi$ and $\phi^+$ are given by the method of characteristics in a small conic neighborhood of the bicharacteristic.
They have the same characteristics and the same boundary data, so in this neighborhood they are equal.

Similarly, $B$ solves the same recurring system of transport equations as $A^+$, though with different initial data; on $U$, $B = \rho\chi A^+$.
Since the first transport equation is homogeneous, we have,
\[ B_0(0, y_0, \eta_0) = \rho(t_0, x_0)\chi(t_0, x_0)A^+_0(0, y_0, \eta_0) = \frac{1}{2}\chi(\tau_{y_0, \eta_0}, \gamma_{y_0, \eta_0}(\tau_{y_0, \eta_0})). \]

We repeat the construction for points in $T^*((0, T) \times \Gamma^\prime)$ with $\tau_0 < 0$, as mentioned before.
Passing to a microlocal partition of unity, the above constructs $v$ as a parametrix for \eqref{Backward-Problem} with boundary data $h = \chi\Lambda_a f$ and zero Cauchy data at $\{t = T\}$.
Restricting $v$ to $\{t = 0\}$ yields a representation of $A_a\chi\Lambda_a$ as a pseudodifferential operator, whose principal symbol is $B_0$.
After accounting for both contributions to $B_0(0, y_0, \eta_0)$ from singularities propagating along $\gamma_{y_0, \eta_0}$ and $\gamma_{y_0, -\eta_0}$, we see that the principal symbol is
\[ B_0(0, y_0, \eta_0) = \frac{1}{2}[\chi(\tau_{y_0, \eta_0}, \gamma_{y_0, \eta_0}(\tau_{y_0, \eta_0})) + \chi(\tau_{y_0, -\eta_0}, \gamma_{y_0, -\eta_0}(\tau_{y_0, -\eta_0}))]. \]
Therefore $A_a\chi\Lambda_a$ is of order zero.
\end{proof}

\begin{remark}
The previous two lemmas assumed that the eikonal equation was solvable on $(0, T) \times \Omega$.
In general, solutions with Cauchy data on $\{t = 0\}$ exist only on a small interval $[0, t_1]$.
To continue past $\{t = t_1\}$, we use the previous parametrix to obtain Cauchy data on this hyperplane.
Then, we solve the eikonal equations with boundary data $\phi^\pm_1|_{t = t_1} = y \cdot \eta$, and the transport equations with Cauchy data given by the previous parametrix.
This will be solvable on some interval $(t_1, t_2]$.
By compactness, there exists a positive lower bound on the length of the interval on which the eikonal equation is solvable.
Therefore, after repeating the construction at most finitely many times (when $T < \infty$), we obtain a parametrix on $(0, T) \times \Omega$.
The backward parametrix is then also constructed in a similar way on the same intervals.
\end{remark}

We now conclude the proof of Theorem \ref{Partial-Stability}. According to Lemma \ref{Backward-Parametrix}, $A_a\chi\Lambda_a$ is a pseudodifferential operator. 
Its symbol is of order zero by construction.
If $\mathcal{K}$ is visible from $\Gamma^\prime$, then the principal symbol comprises two terms; both are nonnegative, and at least one is positive.
Hence $A_a\chi\Lambda_a$ is elliptic.
If $\mathcal{K}$ is not visible, then $A_a\chi\Lambda_a$ is still a pseudodifferential operator, but it is only elliptic at those points of $T^*\Omega \setminus 0$ which are visible from the boundary.

Finally, we note that even if $\mathcal{K}$ is not visible from $\Gamma^\prime$, $A_a\chi\Lambda_a$ remains a pseudodifferential operator, and is still Fredholm by the considerations of Theorem \ref{Complete-Stability}.

\section{A priori estimate}

We return now to the proof of Proposition \ref{Apriori-Theorem}. 
We follow \cite[Theorem 4.2]{llt}, using the method of multipliers to develop an a priori estimate.
There, they studied operators of the form $\partial_t^2 - A$, with $A$ uniformly elliptic and in divergence form. 
For our purposes here we modify their proof to account for the lower order term in the damped wave equation.
Recall $u$ is a solution of
\[
 \left\{
  \begin{array}{rcl}
  \square_a u &=& F \text{\ in\ } (0, T) \times \Omega, \\
  u|_{t=0} &=& f, \\
  \partial_t u|_{t=0} &=& g \\
  u|_{(0, T) \times \Gamma} &=& h
  \end{array}
 \right.
\]
subject to the compatibility condition $h|_{t=0} = f|_{\Gamma}$.

\begin{proof}
The proof has three steps. 
First, we establish an a priori estimate of the form above, except that it also depends on the $L^1(0, T; L^2(\Gamma))$-norm of the normal derivative of $u$, along with $||u||_{H^1}$ and $||\partial_t u||_{L^2}$. 
Next, we remove the first dependency by estimating the normal derivative. 
At this point the estimate contains a term depending on the $L^1(0, T; L^2(\Gamma))$-norm of $\nabla u$, which we separate into its normal and tangential components.
The final dependencies are removed via Gronwall's inequality.

Let $n$ be the outward normal vector field of $\Gamma$.
As $\Gamma$ is smooth, we can extend $n$ to a smooth vector field in a neighborhood of $\Gamma$.
Using a cut-off function, we can then extend it to a smooth vector field on a neighborhood of $\Omega$, which we will also write as $n$. 
The result does not depend on the particular extension chosen.

\emph{Step 1}: We begin by fixing $s \in (0, T)$ in the PDE and multiplying through by $2c^{-2}\partial_t u$, integrating with respect to $\Omega$.
\begin{equation*}
\int_\Omega 2c^{-2} (\partial_t^2 u) \partial_t u - 2 (\Delta u) u_t + 2ac^{-2}(\partial_t u)^2\, dx = \int_\Omega 2c^{-2}(\partial_t u)^2 F\, dx.
\end{equation*}
We apply the identity
\[
 2c^{-2} (\partial_t^2 u) \partial_t u - 2(\Delta u) u_t = \frac{d}{dt}\left( ||c^{-1} \partial_t u||_{L^2}^2 + ||\nabla u||_{L^2}^2 \right) - 2\nabla \cdot (\partial_t u)(\nabla u)
\]
to the first two terms. 
Rearranging, we have
\[ 
 \frac{d}{dt}\left( ||c^{-1} \partial_t u||_{L^2}^2 + ||\nabla u||_{L^2}^2 \right) = \int_\Omega 2\nabla \cdot (\partial_t u)(\nabla u) - 2ac^{-2}(\partial_t u)^2 + 2c^{-2}(\partial_t u) F\, dx.
\]
The second term can be simplified with the divergence theorem. 
After integrating from $0$ to $s$ and freely applying Young's inequality, we have
\begin{align*}
||c^{-1} \partial_t u(s)||_{L^2}^2 + ||\nabla u(s)||_{L^2}^2 &\le C \left\{||F||_{L^1(0, T; L^2)}^2 + ||f||_{H^1}^2 + ||g||_{L^2}^2 + ||h||_{H^1}^2 \phantom{\int} \right. \\ 
  & + \left. \int_0^s \int_{\Gamma} |\nabla u \cdot n|^2\, dS\, dt \right\} \\ 
  & + (1 + ||a||_\infty) \int_0^s ||c^{-1}(\partial_t u)||_{L^2(\Omega)}^2 \,
dt.
\end{align*}
This agrees with the analogous estimate \cite[4.12]{llt}. 
Here $C$ is a generic constant that is independent of $a$.

\emph{Step 2}: To establish the needed estimate on the normal derivative, we repeat the process above, multiplying through by $c^{-2}(\nabla u \cdot n)$ and integrating over $(0, s) \times \Omega$.
\begin{equation}
\label{Normal-Derivative-Equation}
\int_0^s \int_\Omega [c^{-2}\partial_t^2u - \Delta u + c^{-2}a\partial_t u](\nabla u \cdot n) \, dx\, dt = \int_0^s \int_\Omega c^{-2}F(\nabla u \cdot n)\, dx\, dt
\end{equation}
The first two terms reduce nearly the same as the analogous estimates \cite[4.13]{llt} and \cite[4.16]{llt}:
\begin{align*}
\int_0^s \int_\Omega c^{-2}(\partial_t^2u)(\nabla u \cdot n)\, dx\, dt &= \int_\Omega c^{-2}(\partial_t u(s, \cdot))(\partial u(s, \cdot) \cdot n) - g(\nabla f \cdot n) \, dx \\
 &+\int_0^s\int_\Omega \frac{1}{2} (\nabla \cdot (c^{-2}n))(\partial_t u)^2 \,dx\, dt \\
 &-\int_0^s\int_\Omega \partial_t(c^{-2})(\partial_t u)(\nabla u \cdot n)  \, dx\, dt \\
 &-\int_0^s\int_{\Gamma} \frac{1}{2}(\nabla(c^{-2}) \cdot n)(\partial_t h)^2\, dS\, dt,
\end{align*}
\begin{align*}
\int_0^s (\Delta u)(\nabla u \cdot n)\, dx\, dt &= \int_0^s\int_{\Gamma} \frac{1}{2}|\nabla u|^2 - |\nabla u \cdot n|^2\, dS\, dt \\
  &+ \int_0^s\int_\Omega \big\langle (\nabla n)(\nabla u), \nabla u \rangle\, dx\,dt.
\end{align*}
Here we use the fact that 
\[ 
\frac{1}{2}|\nabla u|^2 - |\nabla u \cdot n|^2 = \frac{1}{2}|\nabla u \cdot n^\perp|^2 - \frac{1}{2}|\nabla u \cdot n|^2 \le \frac{1}{2}|\nabla u \cdot n|^2
\]
Isolating this term to the left-hand side, we obtain,
\begin{align*}
\frac{1}{2}\int_0^s\int_{\Gamma} |\nabla u \cdot n|^2 \, dS\, dt &\le \int_\Omega c^{-2}(\partial_t u(s, x))(\nabla u(s, x) \cdot n) - g(\nabla f \cdot n) \, dx \\
 &+\int_0^s\int_\Omega \frac{1}{2} (\nabla \cdot c^{-2}n)(\partial_t u)^2 - (\partial_tc^{-2})(\partial_t u)(\nabla u \cdot n)  \, dx\, dt \\
 &-\int_0^s\int_{\Gamma} \frac{1}{2}(\nabla(c^{-2}) \cdot n)(\partial_t h)^2\, dS\, dt \\ 
 &+\int_0^s\int_\Omega \big\langle (\nabla n)(\nabla u), \nabla u \rangle\, dx\,dt \\ 
 &+\int_0^s\int_\Omega c^{-2}a(\partial_t u)(\nabla u \cdot n)\, dx\, dt \\
 &-\int_0^s\int_\Omega c^{-2}F(\nabla u \cdot n)\, dx\, dt.
\end{align*}
On the right hand side, we majorize with the absolute value and apply Young's inequality to several terms. 
This yields the desired estimate for the normal derivative in terms of the boundary data and the $L^1(0, T; L^2(\Omega))$-norms of $c^{-1}(\partial_t u)$ and $|\nabla u|$. 
Notice it is only these last two terms that depend on $||a||_\infty$, and that the dependence is at most linear.

\emph{Step 3}: We begin from the inequality derived from applying the estimate for the normal derivative to the estimate from step 1.
\begin{align*}
||(u, \partial_t u)(s)||_{\mathcal{H}}^2 &\le C\left\{ ||F||_{L^1(0, T; L^2)}^2 + ||f||_{H^1}^2 + ||g||_{L^2}^2 + ||h||_{H^1}^2 \right\}  \\
  &+ (1 + ||a||_\infty)\int_0^s ||(u, \partial_t u)(t)||_{\mathcal{H}}^2 \, dt 
\end{align*}
By Gronwall's inequality, for all $0 \le s \le T$ we have
\begin{equation*}
||(u, \partial_t u)(s)||_{\mathcal{H}}^2 \le C e^{2s||a||_\infty} \left\{ ||F||_{L^1(0, T; L^2)}^2 + ||f||_{H^1}^2 + ||g||_{L^2}^2 + ||h||_{H^1}^2 \right\}.
\end{equation*}
Finally, we take the supremum over $s \in [0, T]$ to obtain the estimate of Proposition \ref{Apriori-Theorem}.
While we assumed that $\Omega, c,$ and $a$ were all smooth, the proof shows that their being $C^1$ suffices. 
\end{proof}

\bibliographystyle{abbrv} 
\bibliography{multi}
\end{document}